\documentclass[12pt,a4paper]{article}
\usepackage{amsmath,amscd,amsthm,amssymb,stmaryrd}
\usepackage{graphicx}
\usepackage{caption}
\usepackage{subcaption}

 \usepackage{graphicx}
\setlength{\textwidth}{16cm}
\setlength{\oddsidemargin}{1cm}
\setlength{\evensidemargin}{1cm}
\setlength{\textheight}{18.5cm}
\setlength{\parskip}{.5\baselineskip}
\setlength{\parskip}{4mm}
\setlength{\headsep}{1.5cm}

\usepackage{cases}

  \def\r{\mathbb R}
\def\l{\mathbb L}
\def\h{\mathbb H}
\def\s{\mathbb S}
\def\e{\mathbb E}
\def\n{\mathbb N}

\newtheorem{theorem}{theorem}[section]
\newtheorem{corollary}[theorem]{Corollary}

\newtheorem{lemma}[theorem]{Lemma}
\newtheorem{proposition}[theorem]{Proposition}
\newtheorem{remark}[theorem]{Remark}
\newtheorem{example}[theorem]{Example}

\def\r{\mathbb{R}}
\def\h{\mathbb{H}}
 \def\n{\mathbf{n}}

\title{The Dirichlet problem of the constant mean curvature equation in Lorentz-Minkowski space and in Euclidean space}
\author{\textbf{Rafael L\'opez\footnote{This research has been partially supported by the grant no. MTM2017-89677-P, MINECO/AEI/FEDER, UE.}}
\\
Departamento de Geometr\'{\i}a y Topolog\'{\i}a\\ Instituto de Matem\'aticas (IEMath-GR)\\
 Universidad de Granada\\
 18071 Granada, Spain}


 \date{}
\begin{document}

  \maketitle
 
\abstract{We investigate the differences and similarities of the Dirichlet problem of the mean curvature equation   in the  Euclidean space and in the Lorentz-Minkowski space.   Although the solvability of the Dirichlet problem follows standards techniques of elliptic equations, we focus in showing how the spacelike condition in the Lorentz-Minkowski space allows to drop the hypothesis on the mean convexity which is required in  the Euclidean case. }

Keywords: Euclidean space; Lorentz-Minkowski space; Dirichlet problem; mean curvature, maximum principle\\

MSC: 58G20, 53A10, 53C50

\section{Introduction}

In this paper we investigate the differences and similarities in the study of the solvability of the  Dirichlet problem for the constant mean curvature equation in the Euclidean space and in the Lorentz-Minkowski space.  Firstly we introduce the following notation. Let $\epsilon\in\{-1,1\}$. Denote by $\r^{n+1}_\epsilon$ the vector space  $\r^{n+1}$ equipped with the   metric  
$$\langle,\rangle=(dx_1)^2+(dx_2)^2+\ldots+(dx_n)^2+\epsilon (dx_{n+1})^2,$$
 where $(x_1,\ldots,x_{n+1})$ are the canonical coordinates of $\r^{n+1}$.  If $\epsilon=1$ (resp. $\epsilon=-1$), the space  is the Euclidean space $\e^{n+1}$ (resp. the Lorentz-Minkowski space $\l^{n+1}$).
 We consider the   Dirichlet problem for the constant mean curvature equation in $\r^{n+1}_\epsilon$. Let $\Omega\subset\r^n$ be a bounded  domain with smooth   boundary   $\partial\Omega$ and let $H$ be  a real number. The Dirichlet problem asks for existence and uniqueness of a function $u\in C^2(\Omega)\cap C^0(\partial\Omega)$ such that 
\begin{numcases}{}
 (1+\epsilon |Du|^2)\Delta u+\epsilon D_iuD_ju D_{ij}u=2H (1+\epsilon|Du|^2)^{3/2} & \mbox{in $\Omega$}\label{eq1}\\
 u=0 &\mbox{on $\partial\Omega$}\label{eq1-2}\\
  |D u|<1&  \mbox{in $\Omega$.\quad (if $\epsilon=-1$)}\label{eq1-3}
\end{numcases}
Here $D$ is the gradient operator, $D_i$ is the derivative with respect to the variable $x_i$ and the summation convention is used. A solution of \eqref{eq1}-\eqref{eq1-2}  describes a hypersurface with constant mean curvature $H$ in $\r_\epsilon^{n+1}$ whose boundary is contained in the hyperplane $x_{n+1}=0$. If   $\epsilon=-1$, the extra condition $|Du|<1$ in $\Omega$ means that the hypersurface is spacelike.  A hypersurface in $\e^{n+1}$ (resp. in $\l^{n+1}$) with zero mean curvature ($H=0$)  is called a minimal (resp. maximal)  hypersurface.

The example that shows  the differences of the theory of constant mean curvature hypersurfaces in both ambient spaces is  the Bernstein problem which we now formulate. Suppose that the domain $\Omega$ is $\r^n$. A graph on $\r^n$ is called an entire graph.   Let $H=0$.  The Bernstein problem asks if, besides linear functions, there are other entire solutions of  \eqref{eq1} with zero mean curvature. In the case $n=2$,    Bernstein  proved    that planes are the only entire minimal surfaces   (\cite{ber}). In arbitrary dimension, this result holds if $n\leq 7$.   A famous theorem of Bombieri,  De Giorgi and    Giusti asserts that there are other entire minimal graphs if $n\geq 8$   (\cite{bombieri}). In contrast, in $n$-dimensional Lorentz-Minkowski space, Cheng and Yau proved, extending previous works of Calabi, that spacelike  hyperplanes are the only entire maximal hypersurfaces (\cite{cy}).

The interest of the study of constant mean curvature  (cmc in short)  hypersurfaces has its origin in physics. In the Euclidean space $\e^3$, cmc surfaces are mathematical models    of the shape of a liquid in capillarity problems and of a  interface
that separates two medium of different physical   properties. In Lorentz-Minkowski $\l^{n+1}$, cmc spacelike hypersurfaces have been used in General Relativity to prove the positive mass theorem or analyze the space of solutions of Einstein equations (\cite{ch,mt}).

We review briefly the state of the art of the Dirichlet problem for the constant mean curvature equation in both spaces.  Assume that $u$ takes arbitrary continuous boundary values $u=\varphi$ on $\partial\Omega$. In the Euclidean  space and for the minimal case $H=0$, the Dirichlet problem \eqref{eq1} was solved for $n=2$ by Finn  \cite{fi} and in arbitrary dimension by Jenkins and Serrin \cite{js} proving that the mean convexity of the domain $\Omega$ yields a necessary and sufficient condition of the solvability of the Dirichlet problem  for all boundary values $\varphi$: a domain $\Omega$ is said to be mean convex if   the mean curvature $\kappa_{\partial\Omega}$ of $\partial\Omega$ with respect to the inner normal is non-negative. If $H\not=0$,   a stronger assumption is needed on $\Omega$ relating $H$ and $\kappa_{\partial\Omega}$ and  the answer appears in the seminal paper \cite{se}, where Serrin proved the following result. 

\begin{theorem}\label{t1}
The Dirichlet problem \eqref{eq1}   in the Euclidean  space  has a unique solution  for any boundary values $\varphi$  if and only if
\begin{equation}\label{c-se}
\kappa_{\partial\Omega}\geq \frac{n|H|}{n-1}\quad \mbox{ on $\partial\Omega$}.
\end{equation}
 \end{theorem}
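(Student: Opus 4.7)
The plan is to prove sufficiency by the continuity method and necessity by a local barrier argument at a point of $\partial\Omega$ where \eqref{c-se} fails. For sufficiency, I would embed the given problem into a one-parameter family, for instance by replacing $H$ by $tH$ and $\varphi$ by $t\varphi$ for $t\in[0,1]$, noting that at $t=0$ the unique solution is $u\equiv 0$. The set of $t\in[0,1]$ for which the problem is solvable in $C^{2,\alpha}(\overline{\Omega})$ is open by the implicit function theorem applied to the quasilinear elliptic operator in \eqref{eq1}, and closedness reduces to establishing a priori estimates in $C^{1,\alpha}(\overline{\Omega})$ uniform in $t$.

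The required estimates are obtained in the standard cascade. A $C^0$ bound follows from the comparison principle, using spherical caps of mean curvature $H$ as upper and lower barriers; these exist globally over $\Omega$ precisely because \eqref{c-se} forces $\Omega$ to fit inside the projection of such a cap. The boundary gradient estimate is next obtained by constructing, near each $p\in\partial\Omega$, local barriers of the form $w^{\pm}(x)=\varphi(p)\pm\psi(d(x))$, where $d$ denotes the distance to $\partial\Omega$ and $\psi$ solves an ODE derived by substitution into \eqref{eq1}. Its solvability with $\psi(0)=0$ and $\psi'(0)$ arbitrarily large rests on the hypothesis \eqref{c-se}, which is where the geometric condition enters sharply. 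Once the gradient is controlled on $\partial\Omega$, a global gradient bound follows from the maximum principle applied to an auxiliary function of the form $|Du|^2 e^{\lambda u}$, exploiting the divergence structure of the equation. With $|Du|$ bounded, \eqref{eq1} becomes uniformly elliptic and the Ladyzhenskaya--Ural'tseva theory together with Schauder estimates upgrade the bound to $C^{2,\alpha}(\overline{\Omega})$. The continuity method then yields existence at $t=1$, while uniqueness is immediate from the comparison principle for the mean curvature operator.

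For necessity, I would argue by contradiction: if $\kappa_{\partial\Omega}(p_0)<\frac{n|H|}{n-1}$ at some $p_0\in\partial\Omega$, choose boundary data $\varphi$ whose profile near $p_0$ is so large that no graph over $\Omega$ with constant mean curvature $H$ can accommodate it. Quantitatively, one tests a hypothetical solution against a family of spherical caps of mean curvature $H$ tangent to $\partial\Omega$ at $p_0$ from outside; the failure of \eqref{c-se} at $p_0$ means that such caps separate from $\partial\Omega$ in a manner incompatible with the maximum principle applied to the graph, producing the desired contradiction.

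The main obstacle, and the technical heart of the theorem, is the boundary gradient estimate: it is what forces the sharp constant $n/(n-1)$ and ties the geometry of $\partial\Omega$ analytically to the admissible values of $H$. The other steps in the scheme are robust and well-documented, but the barrier ODE must be analysed carefully in order to extract precisely \eqref{c-se} as the threshold for its solvability, both in the existence direction and as the obstruction in the necessity direction.
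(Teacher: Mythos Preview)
The paper does not actually prove Theorem~\ref{t1}: it is quoted as a known result of Serrin, with a citation to \cite{se}, and serves as background for the paper's own results (Theorems~\ref{t5e} and~\ref{t5}), which treat the special case of zero boundary data under the weaker hypothesis $\kappa_{\partial\Omega}\geq |H|$. So there is no ``paper's own proof'' to compare against in the strict sense.

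That said, your outline is essentially the classical Serrin scheme, and it is consistent with what the paper does describe. Section~\ref{sec4} lays out exactly the continuity method you propose (with the minor difference that only $H$ is deformed to $tH$; the boundary data are kept fixed), and the paper explicitly remarks that Serrin obtained the boundary gradient estimate using the distance function to $\partial\Omega$ as a barrier---precisely your $w^\pm=\varphi\pm\psi(d)$ construction. For the interior gradient step, the paper (Theorem~\ref{pr7}) gives a cleaner reduction than your auxiliary function $|Du|^2e^{\lambda u}$: one shows directly that each $D_iu$ satisfies a linear elliptic equation with no zero-order term (or, equivalently, that $\langle N,e_3\rangle$ is superharmonic on the graph), so $\sup_\Omega|Du|=\max_{\partial\Omega}|Du|$. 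Your approach works too, but is heavier than needed here. Your necessity sketch is correct in spirit; Serrin's original argument is more analytic (showing the barrier ODE fails precisely when \eqref{c-se} is violated), but the geometric version with spherical caps is a legitimate alternative.
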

 
It is expected that if we assume $\varphi=0$ on $\partial\Omega$, the assumption   \eqref{c-se}  may be relaxed. Indeed, if $\varphi=0$ and $n=2$,   the Dirichlet problem \eqref{eq1}-\eqref{eq1-2} has a unique solution if   $\kappa_{\partial\Omega}\geq |H|$ (\cite{lo10}): see other results in the Euclidean case. If we drop the convexity assumption of $\partial\Omega$, it is possible to derive existence results if one assumes smallness on  the domain $\Omega$ and certain uniform exterior sphere conditions:   see  \cite{be,gt,lo15}

The theory in $\l^{n+1}$ is shorter. The solvability of (\ref{eq1})-\eqref{eq1-3} with arbitrary boundary values was initially investigated in the maximal case $H=0$  assuming the mean convexity of $\partial\Omega$ (\cite{ban,fh}). However,  the groundbreaking result is due to Bartnik and Simon in 1982  where the counterpart   to Theorem \ref{t2} in $\l^{n+1}$ is surprisingly simple because there is not any assumption   on 
$\partial\Omega$   (\cite{bs}).

\begin{theorem}\label{t2}
 The Dirichlet problem  (\ref{eq1})-\eqref{eq1-3} in the Lorentz-Minkowski space    has a unique solution for any spacelike boundary values $\varphi$  if and only if   $\varphi$ has a spacelike extension to $\Omega$.
\end{theorem}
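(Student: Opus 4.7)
The necessity is immediate, since a solution $u$ of \eqref{eq1}--\eqref{eq1-3} is itself a spacelike extension of its boundary trace $\varphi$. For the converse I would fix a spacelike extension $\psi\in C^0(\overline\Omega)$ of $\varphi$, smoothed in the interior so as to preserve $|D\psi|<1$. Uniqueness follows from the standard comparison principle: writing \eqref{eq1} in divergence form as
\begin{equation*}
\mathrm{div}\!\left(\frac{Du}{\sqrt{1-|Du|^2}}\right)=2H,
\end{equation*}
the underlying operator is strictly monotone on the cone of spacelike functions, so two solutions agreeing on $\partial\Omega$ must coincide by testing the difference against $\max(u_1-u_2,0)$ and integrating by parts.

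For existence I would run the continuity method on the family of Dirichlet problems obtained by deforming the right-hand side from the value realised by the competitor $\psi$ to the target mean curvature $H$, while keeping the boundary data fixed at $\varphi$ throughout. The set of parameters for which a spacelike $C^{2,\alpha}$ solution exists is open via the implicit function theorem applied to the linearisation, which is uniformly elliptic as long as the solution remains spacelike. Closedness reduces to three a priori estimates: a $C^0$ bound via the maximum principle (with $\psi$ playing the role of admissible sub/supersolution); a boundary gradient bound $|Du|\leq 1-\delta_1$ on $\partial\Omega$ obtained by comparing $u$ with translates of $\psi$ and with local barriers built from pieces of hyperboloids tangent to the boundary; and an interior gradient bound $|Du|\leq 1-\delta_2$ on each compact $K\subset\Omega$.

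The main obstacle, and the structural reason why Theorem \ref{t2} imposes no hypothesis whatsoever on the geometry of $\partial\Omega$ (in sharp contrast with the mean-convexity-type condition \eqref{c-se}), is the interior gradient estimate. I plan to establish it by the Bartnik--Simon argument: for a spacelike graph the Lorentzian unit normal $\mathbf{N}=(Du,1)/\sqrt{1-|Du|^2}$ takes values in the hyperboloid $\h^n$, and the support function $\vartheta:=-\langle \mathbf{N},e_{n+1}\rangle=1/\sqrt{1-|Du|^2}$ satisfies, on the induced spacelike metric on the graph, a linear elliptic inequality whose coefficients depend only on $n$ and $H$. A cut-off/Moser iteration or a direct maximum principle argument applied to $\eta\,\vartheta$ for a suitable compactly supported cutoff $\eta$ then produces
\begin{equation*}
\sup_{K}\vartheta\leq C\!\left(n,H,\mathrm{dist}(K,\partial\Omega),\mathrm{osc}_\Omega u\right)
\end{equation*}
on every compact $K\subset\Omega$, \emph{without using any geometric hypothesis on $\partial\Omega$}. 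Once this interior bound is combined with the boundary bound, equation \eqref{eq1} becomes uniformly elliptic on $\overline\Omega$ and Schauder together with Ladyzhenskaya--Ural'tseva theory supplies the $C^{2,\alpha}$ regularity needed to close the continuity argument.
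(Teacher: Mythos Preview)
The paper does \emph{not} prove Theorem~\ref{t2}; it is quoted in the Introduction as the result of Bartnik and Simon \cite{bs}. What the paper actually proves is the special case $\varphi=0$ (Theorem~\ref{t7} in Section~\ref{sec8}), by a deliberately more elementary and geometric route than the one you sketch. So you are not reproducing ``the paper's own proof'' but rather outlining the original Bartnik--Simon argument.

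It is worth comparing the two strategies, since the paper's point is precisely to replace the analytic machinery you invoke by geometric comparison. In your proposal the interior gradient estimate is obtained by a cutoff/Moser iteration applied to $\vartheta=1/\sqrt{1-|Du|^2}$. The paper instead observes (Theorem~\ref{pr7}, Proof~2) that for constant $H$ one has $\Delta_M\langle N,e_3\rangle\leq 0$, hence $\sup_\Omega|Du|=\max_{\partial\Omega}|Du|$ directly; no iteration is needed, and the interior estimate reduces \emph{globally} to a boundary estimate. For the boundary estimate, you propose ``translates of $\psi$ and local hyperboloid barriers''; the paper (for $\varphi=0$) uses the uniform exterior circle condition satisfied by any $C^2$ boundary together with pieces of rotational cmc spacelike surfaces $S(c;\varepsilon)$ with a conical singularity on the axis (Section~\ref{sec8}, Figure~\ref{fig2}). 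These rotational barriers are what allow the argument to go through with no convexity assumption on $\partial\Omega$. Your approach is more general (it is needed for non-zero $\varphi$ and variable $H$); the paper's approach is shorter and entirely constructive, but is confined to $\varphi=0$.

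One genuine weakness in your outline: the boundary gradient step is under-specified. Translates of the spacelike competitor $\psi$ are not sub/super\-solutions of the cmc equation, and hyperboloid pieces used as barriers at a boundary point require either convexity of $\partial\Omega$ or an exterior-ball condition, which you do not invoke. In Bartnik--Simon the boundary control for general $\varphi$ is the delicate part and uses more than what you indicate; if you want to stay close to the paper's spirit, the cleanest fix is to use the $C^2$ exterior circle condition plus the rotational barriers $S(c;\varepsilon)$ as in the proof of Theorem~\ref{t7}, at least for the case $\varphi=0$.
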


This result was later generalized in other Lorentzian manifolds:  \cite{ba,gh,gr,kl,th}. The method employed in the proof of Theorems \ref{t1} and \ref{t2} follows the Leray-Schauder fixed point theorem for elliptic equations because equation \eqref{eq1} is a quasilinear elliptic differential equation: if $\epsilon=-1$, this is assured   by the spacelike condition \eqref{eq1-3}. In order to apply standard methods in the solvability of the Dirichlet problem, we need to ensure  {\it a priori}  estimates of the height and the gradient for the prospective solutions. Throughout this paper, we refer to the reader \cite{gt} as a general guide.

 The purpose of this work is twofold. Firstly,     give an approach to the results  in Lorentz-Minkowski space comparing with the ones of Euclidean space and showing how the spacelike condition $|Du|<1$  makes completely different the method of obtaining  the  {\it a priori}  estimates. The second objective is to provide geometric proofs to derive these    estimates. For example,   Serrin used  the distance function to $\partial\Omega$ as a barrier for the desirable estimates (\cite{se}), and similarly  Flaherty in the solvability in the Lorentzian case when $H=0$ (\cite{fh}). This distance function is defined in $\Omega$ but   loses its geometric sense if we look    the graph of $u$ in $\e^3$ or $\l^3$. In our case, the   {\it a priori}  estimates will be obtained   by a comparison argument between the solutions of (\ref{eq1}) and known   cmc surfaces, such as, rotational surfaces.  In order to simplify the notation and arguments, we will consider the Dirichlet problem for the $2$-dimensional case, so we will work with surfaces in $\e^3$ and spacelike surfaces in $\l^3$. In such a case, the mean convexity of the curve $\partial\Omega$ is merely  the convexity of $\partial\Omega$.

This paper is organized as follows. After the  Preliminaries  section devoted to fix some definitions and notations, we derive the constant mean curvature equation in Section \ref{sec3}  obtaining some properties of the solutions showing differences in both ambient spaces.   Section \ref{sec4} describes the method of continuity to solve the Dirichlet problem \eqref{eq1}.  In  Section \ref{sec5}  we obtain the height estimates for solutions of \eqref{eq1} and we prove that   the boundary gradient estimates imply global (interior) gradient estimates. In Section \ref{sec7}, we analyze the solvability of the Dirichlet problem in the Euclidean case showing that a strong convexity hypothesis is necessary to solve the problem. Finally, in Section \ref{sec8} we solve the Dirichlet problem in Lorentz-Minkowski space for arbitrary domains and we show the role of the cmc rotational  surfaces in the solvability of the problem.

\section{Preliminaries}\label{sec2}

We need to recall some definitions in Lorentz-Minkowski space. In $\l^3$, the metric  $\langle,\rangle$ is   non-degenerate  of index $1$ and classifies  the vectors of $\r^3$ in three types: a vector $v\in\l^3$ is said to be
  spacelike (resp. timelike, lightlike) if $\langle v,v\rangle>0$ or $v=0$ (resp.  $\langle v,v\rangle<0$, $\langle v,v\rangle=0$ and $v\not=0$). 
The modulus of $v$ is $|v|=\sqrt{|\langle v,v\rangle|}$.  A vector subspace $U\subset\r^3$ is called  spacelike (resp. timelike, lightlike) if the induced metric on $U$ is positive definite (resp. non-degenerate of index $1$, degenerate and $U\not=\{0\}$). Any vector subspace belongs to one of the above three types. For $2$-dimensional subspaces, $U$ is spacelike (resp. timelike, lightlike) if its orthogonal subspace $U^\bot$ is timelike (resp. spacelike, lightlike). A curve or a surface immersed in $\l^3$   is said  to be  spacelike if the induced metric  is positive-definite. 

The spacelike property is a strong condition. For example, any spacelike surface $M$ is orientable. This is due because   a unit vector orthogonal to $M$ is timelike and in $\l^3$, the scalar product of any two timelike vectors is not zero. Thus, if we fix $e_3=(0,0,1)$, which is a timelike vector, it is possible to define a unit orthogonal vector field $N$ on $M$ so   $\langle N,e_3\rangle$ is negative (or positive) on $M$, determining a global orientation.    Another consequence  is that  there do not exist closed spacelike surfaces in $\l^3$, in particular, any compact spacelike surface has non-empty boundary.  Similarly, if a plane contains a closed spacelike curve,   the plane must be spacelike.

 Let $M$ be an orientable surface immersed in $\r_\epsilon^3$. In case $\epsilon=-1$, we also assume that the immersion is spacelike. Let $\nabla^0$ and $\nabla$ be  the Levi-Civita connections in $\r_\epsilon^3$ and $M$ respectively.  The Gauss formula is $\nabla_X^0 Y=\nabla_X Y+\epsilon \sigma(X,Y)$ for any two tangent vector fields $X$ and $Y$ on $M$, where $\sigma$ is the second fundamental form. The mean curvature $H$ of $M$ is defined as
\begin{equation}\label{eqh}
H=\frac12\mbox{trace}(\sigma).
\end{equation}
Let us choose  $N$   a unit normal vector field on $M$ with $\langle N,N\rangle=\epsilon$. Let $A=\nabla^0_N$ stand for  the Weingarten endomorphism with respect to $N$. Then the Gauss formula is  $\nabla_X^0 Y=\nabla_X Y+\epsilon \langle A(X),Y\rangle N$ and $A$ is a diagonalizable map. If $\kappa_1$ and $\kappa_2$ are the principal curvatures, we have 
$$H= \epsilon \frac12  \mbox{trace}(A)=\epsilon \frac12(\kappa_1+\kappa_2).$$

\begin{remark} In case of timelike surfaces of $\l^3$,   the mean curvature is defined as in (\ref{eqh}). However,  although $A$ is self-adjoint with respect to the  induced metric $\langle,\rangle$, this metric is Lorentzian and   it may occur that  $A$ is  not real diagonalizable. 
\end{remark}

\begin{example}\label{ex1}
\begin{enumerate}
\item Planes of $\e^3$ and spacelike planes of $\l^3$ have zero mean curvature.  
\item Round spheres $\s^2(r)$ in $\e^3$ and hyperbolic planes $\h^2(r)$ in $\l^3$ of radius $r>0$ can be described up to a rigid motion as 
$$\{p\in \l^3: \langle p,p\rangle=\epsilon r^2\}.$$
If $\epsilon=-1$, we also assume $\langle p,e_3\rangle<0$, where $e_3=(0,0,1)$. With respect to the Gauss map $N(p)=p/r$, the mean curvature is $H=-\epsilon/r$. 
\item Right circular cylinders of $\r^3_\epsilon$ have constant mean curvature. To be precise, let $a\in\r_\epsilon^3$ be a unit vector with $\langle a,a\rangle=1$ (in $\l^3$, the vector $a$ is spacelike). Up to a rigid motion,  the circular cylinder of axis $a$ and radius  $r>0$ is
$$C(r)=\{p\in \r_\epsilon^3: \langle p,p\rangle-\langle p,a\rangle^2=\epsilon r^2\}.$$   For the orientation $N(p)=(p-\langle p,a\rangle a)/r$,  the mean curvature is $H=-\epsilon /(2r)$. 
\item Let $u=u(x_1,x_2)$ be a smooth function defined in a open domain $\Omega\subset\r^2$ and let $M$ be the graph of $u$. Suppose that $M$ is endowed with the induced metric from $\r_\epsilon^3$. If $\epsilon=-1$, we also assume that $M$ is spacelike, that is,    $|Du|<1$ in $\Omega$.  The mean curvature  $H$ of $M$  satisfies
\begin{equation}\label{lo-media}
(1+\epsilon (D_2 u)^2)D_{11}u-2\epsilon D_1uD_2uD_{12}u+(1+\epsilon (D_1u)^2)D_{22}u=2H(1+\epsilon|Du|^2)^{3/2}
\end{equation}
with respect to the orientation
\begin{equation}\label{normal}
N =\frac{(-\epsilon D_1u,-\epsilon D_2u,1)}{\sqrt{1+\epsilon|D  u|^2}}=\frac{(-\epsilon D  u,1)}{\sqrt{1+\epsilon |D  u|^2}}\cdot
\end{equation}
Let us notice that (\ref{lo-media}) coincides with the equation  (\ref{eq1}).  
\end{enumerate}
\end{example}

\section{The constant mean curvature equation}\label{sec3}
In this section we will derive some properties on the solutions of the cmc equation \eqref{eq1}. The mean curvature equation \eqref{eq1} (or \eqref{lo-media})   can be expressed in the divergence form

 \begin{equation}\label{leq1}
 \mbox{ div }\Big(\dfrac{D  u}{\sqrt{1+\epsilon |D  u|^2}}\Big) =2H \quad \mbox{in $\Omega,$}\end{equation}
with the observation that if  $\epsilon=-1$, we   assume the spacelike condition  $|D  u|<1$ in $\Omega$. For instance, spheres   and  hyperbolic planes   of Example \ref{ex1} are  graphs of the functions 
$$u(x_1,x_2)=-\epsilon \sqrt{r^2-\epsilon(x_1^2+x_2^2)},	\quad\left\{\begin{array}{ll}x_1^2+x_2^2<r^2&\epsilon=1\\ (x_1,x_2)\in\r^2&\epsilon=-1.\end{array}\right.$$
For $\epsilon=1$, $x_3=u(x_1,x_2)$ is defined in a disc and describes a hemisphere in $\s^2(r)$, and for $\epsilon=-1$, $x_3=u(x_1,x_2)$ is the hyperbolic plane $\h^2(r)$. On the other hand, a cylinder $C(r)$ with axis $a=(0,1,0)$ and radius $r>0$ is the graph of the function 
$$u(x_1,x_2)=-\epsilon\sqrt{r^2-\epsilon x_1^2},	\quad\left\{\begin{array}{ll}|x_1|<r&\epsilon=1\\ (x_1,x_2)\in\r^2&\epsilon=-1.\end{array}\right.$$

Equation (\ref{leq1}) (with \eqref{eq1-3} if $\epsilon=-1$) is of quasilinear elliptic type, hence we can apply the machinery for these equations.  It is easily seen  that the difference of two solutions  of equation (\ref{eq1}) satisfies the maximum principle.  As a consequence, we give a statement of  the comparison principle in our context. We define   the operator      
\begin{equation}\label{eq4}
Q[u]=(1+\epsilon|Du|^2)\Delta u-\epsilon D_iuD_juD_{ij}u-2H(1+\epsilon |Du|^2)^{3/2}.
\end{equation} 
 The comparison principle asserts (\cite[Th. 10.1]{gt}).
 
 \begin{proposition}[Comparison principle] If $u,v\in   C^2(\overline{\Omega})$ satisfy $Q[u]\geq Q[v]$ in $\Omega$ and $u\leq v$ on $\partial\Omega$, then $u\leq v$ in $\Omega$.  If we replace $Q[u]\geq Q[v]$ by $Q[u]> Q[v]$, then   $u<v$ in $\Omega$. In particular, the solution of the Dirichlet problem, if it exists, is unique.
 \end{proposition}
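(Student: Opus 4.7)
The strategy is the standard one for quasilinear elliptic operators: rewrite $Q[u]-Q[v]$ as a linear elliptic operator applied to $w=u-v$, and then invoke the classical weak and strong maximum principles. The crucial structural remark is that the operator in (\ref{eq4}) can be written
\begin{equation*}
Q[u]=a^{ij}(Du)D_{ij}u+b(Du),\qquad a^{ij}(p)=(1+\epsilon|p|^2)\delta^{ij}-\epsilon p_ip_j,\ \ b(p)=-2H(1+\epsilon|p|^2)^{3/2},
\end{equation*}
and has no explicit dependence on the value $u$ itself. This absence of a $u$-term is what will guarantee that the linear operator one obtains in the end has no zeroth-order coefficient, so no sign hypothesis on $w$ is needed.

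To perform the linearization I would interpolate along $u_t=tu+(1-t)v$, $t\in[0,1]$, and use the telescoping identity
\begin{equation*}
a^{ij}(Du)D_{ij}u-a^{ij}(Dv)D_{ij}v=a^{ij}(Du)D_{ij}w+\bigl(a^{ij}(Du)-a^{ij}(Dv)\bigr)D_{ij}v,
\end{equation*}
then apply the mean value theorem to $p\mapsto a^{ij}(p)$ and $p\mapsto b(p)$ along the segment from $Dv$ to $Du$. This recasts the difference as
\begin{equation*}
Q[u]-Q[v]=A^{ij}(x)D_{ij}w+B^k(x)D_kw,
\end{equation*}
with $A^{ij}(x)=a^{ij}(Du(x))$ and $B^k(x)$ a bounded coefficient on $\overline{\Omega}$ involving $D_{ij}v$ together with the integrals $\int_0^1\partial_{p_k}a^{ij}(Du_t)\,dt$ and $\int_0^1\partial_{p_k}b(Du_t)\,dt$. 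The key technical point is ellipticity of $A^{ij}$: for any $\xi\in\r^n$,
\begin{equation*}
a^{ij}(p)\xi_i\xi_j=(1+\epsilon|p|^2)|\xi|^2-\epsilon(p\cdot\xi)^2,
\end{equation*}
which in the Euclidean case $\epsilon=1$ is bounded below by $|\xi|^2$, and in the Lorentzian case $\epsilon=-1$ is bounded below by $(1-|p|^2)|\xi|^2>0$, precisely because both $u$ and $v$ satisfy the spacelike condition $|Du|,|Dv|<1$. Compactness of $\overline{\Omega}$ turns this pointwise ellipticity into uniform ellipticity, so $L=A^{ij}D_{ij}+B^kD_k$ is a uniformly elliptic linear operator with bounded coefficients and no zeroth-order term.

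With $Lw=Q[u]-Q[v]\geq 0$ in $\Omega$ and $w\leq 0$ on $\partial\Omega$, the classical weak maximum principle for linear elliptic operators lacking zeroth-order terms (Theorem 3.1 in \cite{gt}) immediately gives $w\leq 0$ in $\Omega$, hence $u\leq v$. For the strict version, assume $Q[u]>Q[v]$ in $\Omega$: if $w$ attained a non-negative value at some interior $x_0\in\Omega$, then at a maximum one would have $Dw(x_0)=0$ and $D^2w(x_0)\leq 0$, so $Lw(x_0)=A^{ij}(x_0)D_{ij}w(x_0)\leq 0$ by positive-definiteness of $A^{ij}$, contradicting $Lw(x_0)>0$; combined with $w\leq 0$ on $\partial\Omega$, this forces $w<0$ throughout $\Omega$. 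Uniqueness of the Dirichlet problem then follows by applying the non-strict statement to the pairs $(u,v)$ and $(v,u)$. The only genuine obstacle in this argument is verifying ellipticity in the Lorentzian case, and it reduces to the geometric fact that the open unit ball of admissible gradients is convex, so $|Du_t|<1$ for every $t\in[0,1]$ whenever both $u$ and $v$ are spacelike.
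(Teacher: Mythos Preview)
Your argument is correct and is precisely the standard linearization-plus-maximum-principle proof underlying \cite[Th.~10.1]{gt}, which is exactly what the paper invokes (the paper gives no independent proof, only the citation). One small expositional remark: since you take $A^{ij}(x)=a^{ij}(Du(x))$, ellipticity of $A^{ij}$ only requires $|Du|<1$, not the convexity of the gradient ball; the convexity observation is rather what guarantees that the integrals defining $B^k$ stay in the domain where $b(p)=-2H(1+\epsilon|p|^2)^{3/2}$ is smooth in the Lorentzian case, so it belongs to the boundedness-of-coefficients step rather than the ellipticity step.
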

 
An immediate consequence is the touching principle.
  
  \begin{proposition}[Touching principle]\label{p-tan} Let $M_1$ and $M_2$ be two   surfaces in $\r^3_\epsilon$ with the same constant mean curvature and   with possibly non-empty boundaries $\partial M_1$, $\partial M_2$.   If $M_1$  and $M_2$ have  a common tangent interior point and $M_1$ lies above $M_2$ around $p$, then $M_1$ and $M_2$ coincide at an open set around $p$. The same statement is also valid if $p$ is a common boundary point and the tangent lines to $\partial M_i$ coincide at $p$.
\end{proposition}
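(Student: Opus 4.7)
The plan is to reduce Proposition~\ref{p-tan} to the strong maximum principle for a linear elliptic equation satisfied by the difference of two local graph representations of $M_1$ and $M_2$.

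First I would choose coordinates adapted to $p$: translate so that $p$ is the origin and rotate so that the common tangent plane $T_p M_1 = T_p M_2$ coincides with $\{x_3 = 0\}$. In the Lorentzian setting this is legitimate because the tangent plane to a spacelike surface is itself spacelike, so the direction $e_3$ may be taken as the timelike unit normal. In a small common planar neighborhood $U \subset \r^2$ of the origin, both $M_1$ and $M_2$ are then graphs $x_3 = u_i(x_1,x_2)$ with $u_i(0) = 0$ and $D u_i(0) = 0$, and (when $\epsilon = -1$) $|D u_i| < 1$ on $U$ after possibly shrinking. The hypothesis that $M_1$ lies above $M_2$ translates into $w := u_1 - u_2 \geq 0$ on $U$ with $w(0) = 0$.

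Next I would linearize the equation along the segment $u_t := t u_1 + (1-t) u_2$, $t \in [0,1]$. Since $Q[u_1] = Q[u_2] = 0$, the fundamental theorem of calculus applied to $t \mapsto Q[u_t]$ gives
\begin{equation*}
0 = Q[u_1] - Q[u_2] = a^{ij}(x) D_{ij} w + b^k(x) D_k w =: L[w],
\end{equation*}
where the coefficients arise from differentiating $Q$ in its second-derivative and gradient arguments and integrating over $t$. The principal part inherits the CMC coefficient matrix $(1 + \epsilon |q|^2)\delta^{ij} - \epsilon q^i q^j$ evaluated along $q = D u_t$, whose eigenvalues are $1$ and $1 + \epsilon |q|^2$. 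These are always positive in the Euclidean case; in the Lorentzian case the convexity bound $|D u_t| \leq t|D u_1| + (1-t)|D u_2| < 1$ keeps the principal matrix positive definite along the whole path. Hence $L$ is linear and elliptic with no zero-order term.

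Finally, $w \geq 0$ satisfies $L[w] = 0$ on $U$ and attains the value $0$ at the interior point $0$, so the strong maximum principle (see \cite{gt}) forces $w \equiv 0$ on a neighborhood of $0$, i.e.\ $M_1$ and $M_2$ coincide near $p$. For the boundary version the same linearization is carried out in a one-sided planar domain whose boundary is a smooth arc through $0$; the hypothesis that the tangent lines to $\partial M_i$ match at $p$ gives $D w(0) = 0$, and Hopf's boundary-point lemma then excludes a strict minimum at $0$, again yielding $w \equiv 0$ locally. The one delicate point is preserving ellipticity of $L$ in the Lorentzian regime, and it is precisely handled by the convexity of the unit ball used above.
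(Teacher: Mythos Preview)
Your argument is correct and is precisely the standard derivation that the paper leaves implicit: the paper does not prove Proposition~\ref{p-tan} separately but records it as ``an immediate consequence'' of the comparison principle, which in turn is justified by the remark that ``the difference of two solutions of equation~(\ref{eq1}) satisfies the maximum principle''. Your linearization along $u_t = t u_1 + (1-t)u_2$ and appeal to the strong maximum principle / Hopf lemma is exactly how one unpacks that sentence, so the two approaches coincide.
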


   A first difference  of the Dirichlet problem for the constant mean curvature equation \eqref{eq1}  is that in the Euclidean  space $\e^3$ the value $H$ is not arbitrary and  depends on the size of $\Omega$, whereas in $\l^3$ the value $H$ may be arbitrary. Indeed, from equation   (\ref{leq1}), the divergence theorem   yields
$$2|H|\mbox{area}(\Omega)=\left|\int_{\partial \Omega}\langle \frac{Du}{\sqrt{1+\epsilon |Du|^2}},\vec{n}\rangle\right|,$$
where $\vec{n}$ is the outward unit normal vector along $\partial\Omega$.
The idea is   to estimate  the right-hand side from above. If $\epsilon=1$, we have
$$2|H|\mbox{area}(\Omega)=\left|\int_{\partial \Omega}\langle \frac{Du}{\sqrt{1+|Du|^2}},\vec{n}\rangle\right|\leq  \int_{\partial \Omega}  \frac{|Du|}{\sqrt{1+|Du|^2}}  <\int_{\partial \Omega}1=\mbox{length}(\partial\Omega),$$

\begin{proposition}A necessary condition for the solvability of the Dirichlet problem \eqref{eq1}  in $\e^3$   is
\begin{equation}\label{uh}
|H|<\frac{\mbox{length}(\partial\Omega)}{2 \mbox{ area}(\Omega)}\cdot
\end{equation}
\end{proposition}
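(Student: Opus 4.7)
I will follow exactly the line of reasoning that the author has already begun in the displayed computation preceding the proposition, and the work is really to spell out the two ingredients: an integral identity coming from the divergence form of the equation, and a pointwise bound coming from the Euclidean signature $\epsilon=+1$.

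First, I would rewrite the equation in the divergence form \eqref{leq1} specialised to $\epsilon=1$, namely $\mbox{div}(W)=2H$ where $W=Du/\sqrt{1+|Du|^2}$. Integrating over $\Omega$ and invoking the divergence theorem yields the identity
$$2H\,\mbox{area}(\Omega)=\int_{\partial\Omega}\langle W,\vec{n}\rangle\,ds,$$
where $\vec{n}$ is the outer unit conormal to $\partial\Omega$. This is the only place where the PDE \eqref{eq1} enters; from here the argument is purely pointwise on $\partial\Omega$.

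The second step is the key Euclidean observation. The vector field $W$ is the horizontal projection of the upward unit normal \eqref{normal} to the graph, and in the Euclidean case its modulus satisfies $|W|^2=|Du|^2/(1+|Du|^2)<1$ wherever $|Du|$ is finite. By Cauchy--Schwarz, $|\langle W,\vec{n}\rangle|\le |W|$, so taking absolute values on both sides of the identity gives
$$2|H|\,\mbox{area}(\Omega)\le\int_{\partial\Omega}\frac{|Du|}{\sqrt{1+|Du|^2}}\,ds\le \mbox{length}(\partial\Omega),$$
which is \eqref{uh} with a non-strict inequality.

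The only real obstacle is upgrading the last $\le$ to a strict $<$; this is where the assertion is sharp. If the candidate solution $u$ extends to $C^1(\overline\Omega)$, then $|Du|$ is bounded on the compact set $\partial\Omega$, so $|W|\le c<1$ uniformly on $\partial\Omega$ and strict inequality is immediate. To avoid regularity hypotheses beyond $u\in C^2(\Omega)\cap C^0(\overline\Omega)$, I would apply the identity to the inner parallel domains $\Omega_\delta=\{x\in\Omega:\mbox{dist}(x,\partial\Omega)>\delta\}$, on whose smooth boundary $u$ is $C^2$ and hence $\sup_{\partial\Omega_\delta}|W|<1$ by compactness; this produces the strict inequality $2|H|\,\mbox{area}(\Omega_\delta)<\mbox{length}(\partial\Omega_\delta)$ for every $\delta>0$. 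Passing to the limit $\delta\to 0$ gives the non-strict version, and equality would force $|W|\to 1$ on $\partial\Omega$, i.e.\ $|Du|\to\infty$ along the boundary, which is incompatible with $u\in C^0(\overline\Omega)$ together with the interior gradient estimates derived in Section~\ref{sec5}. This passage from non-strict to strict is where the argument is most delicate and deserves the most care.
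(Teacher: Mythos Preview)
Your approach is essentially identical to the paper's: integrate the divergence form \eqref{leq1} over $\Omega$, apply the divergence theorem, and use the pointwise Euclidean bound $|Du|/\sqrt{1+|Du|^2}<1$ to dominate the boundary integral by $\mbox{length}(\partial\Omega)$. The paper simply writes the strict inequality in one line without addressing the regularity issue you raise; your extra care with inner parallel domains is reasonable, though your final step---ruling out equality by appealing to the gradient identity of Theorem~\ref{pr7}---is not quite airtight, since that result presupposes $|Du|$ extends to $\partial\Omega$, which is exactly what is in question (the paper sidesteps this by tacitly assuming enough boundary regularity).
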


Let us notice that this upper bound for $H$ does not depend on the boundary values $\varphi$. In fact, there are explicit examples where all values between $0$ and the upper bound in (\ref{uh}) are attained. Indeed, let $\Omega$ be  a disc of radius $\rho$ and $\varphi=0$. Then the value of  $\mbox{length}(\partial\Omega)/(2 \mbox{ area}(\Omega))$ is $1/\rho$. On the other hand, for each $0<H<1/\rho$, take the spherical cap of radius $1/|H|$  
$$u(x_1,x_2)=-\sqrt{\frac{1}{H^2}-x_1^2-x_2^2},\quad x_1^2+x_2^2<\rho^2.$$
Then $u$ is a graph on $\Omega$ with constant mean curvature $H$ {\it for every $H$ going from $0$ until $1/\rho$}. The limit case $H=1/\rho$ corresponds with a hemisphere of radius $1/|H|$. 

The same computations in $\l^3$ do not provide the same conclusion because $|Du|/\sqrt{1-|Du|^2}$ may be arbitrarily large. So, for the hyperbolic planes  
\begin{equation}\label{hp}
u(x_1,x_2)=\sqrt{\frac{1}{H^2}+x_1^2+x_2^2}
\end{equation}
the value 
$$\frac{|Du|}{\sqrt{1-|Du|^2}}=|H|\sqrt{x_1^2+x_2^2}$$
is arbitrary large and the function $u$ is defined {\it in any domain of the plane $\r^2$ and for any $H$}. 

A second difference   is   the question of the existence of entire solutions of \eqref{eq1}  with non-zero mean curvature $H$: recall that the case $H=0$ (Bernstein problem) was discussed in the Introduction. In $\l^3$, the hyperbolic planes  (\ref{hp})   show that for any $H$, there are solutions \eqref{eq1} defined in the plane $\r^2$. Also the cylinders $u(x_1,x_2)=\sqrt{1/H^2+x_1^2}$  are other examples of entire solutions of \eqref{eq1}-\eqref{eq1-3}. However in the Euclidean  space, we have

\begin{proposition}\label{t-closure} Let $\Omega$ be a domain of $\r^2$. If $u$ is a solution of \eqref{eq1} with $H\not=0$ in $\e^3$, then $\Omega$ does not contain the closure of a disk of radius $1/|H|$.
\end{proposition}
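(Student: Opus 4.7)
The plan is to argue by contradiction and adapt the flux computation that produced the necessary condition \eqref{uh}, but now applied to an interior disk instead of the whole domain. Assume there exists a closed disk $\overline{D}\subset\Omega$ of radius $R=1/|H|$ centered at some point $a\in\r^2$. Because $\Omega$ is open and $\overline{D}$ is compact, $u$ is of class $C^2$ on a neighborhood of $\overline{D}$; in particular $|Du|$ is bounded on $\overline{D}$ by some constant $M<\infty$.

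Next, I would rewrite \eqref{eq1} in the divergence form \eqref{leq1} and apply the divergence theorem on $D$ to obtain
\begin{equation*}
2H\,\mathrm{area}(D)=\int_{\partial D}\Big\langle\frac{Du}{\sqrt{1+|Du|^2}},\vec{n}\Big\rangle\,ds.
\end{equation*}
Taking absolute values and bounding the integrand by $|Du|/\sqrt{1+|Du|^2}$, the key observation is that, thanks to the uniform bound $|Du|\leq M$ on $\overline{D}$, the strict inequality
\begin{equation*}
\frac{|Du|}{\sqrt{1+|Du|^2}}\leq\frac{M}{\sqrt{1+M^2}}=:c<1
\end{equation*}
holds pointwise on $\partial D$. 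Therefore $2|H|\pi R^2\leq 2\pi R\,c$, so $|H|\leq c/R<1/R=|H|$, a contradiction.

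There is no serious obstacle here: the only ingredient beyond the argument that led to \eqref{uh} is the elementary remark that compactness of $\overline{D}$ inside the open set $\Omega$ keeps $|Du|$ uniformly bounded up to $\partial D$, and this is exactly what converts the non-strict flux inequality available for a general $\Omega$ into a strict one at the critical radius $R=1/|H|$. A purely geometric alternative, more in keeping with the paper's philosophy of comparing with rotational cmc surfaces, would be to slide a lower hemisphere of radius $1/|H|$ (which has mean curvature $H$) vertically from above onto the graph of $u$ over $\overline{D}$, observe that the first contact point cannot lie over $\partial D$ because the hemisphere has a vertical tangent plane there while $u$ has a finite one, and then invoke the touching principle (Proposition \ref{p-tan}) to force $u$ to coincide with the hemisphere throughout $D$, contradicting the smooth extension of $u$ across $\partial D$.
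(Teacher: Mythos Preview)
Your flux argument is correct and is a genuinely different route from the paper's proof. The paper argues purely geometrically: assuming $H>0$ after a sign normalization, it takes a full sphere $\s^2(1/H)$ with inward orientation (so mean curvature $H$) centered on the vertical line through the center of $D$, lifts it above the graph $M$, and lowers it until first contact. Since the projection of the sphere is exactly $\overline{D}\subset\Omega$ and $M$ is a graph over $\Omega$, the contact point is interior to both surfaces, so the touching principle forces $M$ to lie in a sphere of radius $1/H$, contradicting that its projection $\Omega$ strictly contains $\overline{D}$. Your divergence-theorem argument is more elementary in that it avoids the maximum principle entirely; the only extra ingredient over the derivation of \eqref{uh} is the observation that compactness of $\overline{D}$ inside $\Omega$ gives a uniform bound on $|Du|$, turning the non-strict flux inequality into a strict one at the critical radius. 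The geometric alternative you sketch at the end (sliding a hemisphere and ruling out boundary contact via the vertical tangent plane) is essentially the paper's own proof, modulo the cosmetic choice of hemisphere versus full sphere.
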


\begin{proof} We proceed by contradiction. Assume that $D$ is an open disk of radius $1/|H|$ such that
$\overline{D}\subset\Omega$. Let $x$ be the center of $D$. Without loss of generality, we suppose that the sign of $H$ is positive: recall that the mean curvature is computed with respect to the orientation \eqref{normal}. Let $r=1/H$ and $\s^2(r)$ be a sphere of radius $r$ whose center lies on the straight-line through $x$ and perpendicular to the $(x_1,x_2)$-plane. Here, and in what follows, $\s^2(r)$ denotes a sphere of radius $r$   whose center may be changing. We orient $\s^2(r)$  by the inward orientation. With this choice of orientation, the mean curvature   is $H$ and the orthogonal projection of $\s^2(r)$ on $\r^2$ is $\overline{D}$.

Let $M$ be the graph of $u$.  Lift $\s^2(r)$ vertically upwards until $\s^2(r)$ is completely above $M$. Then, let us descend $\s^2(r)$ until
the first point $p$ of contact with $M$. Since $\overline{D}\subset \Omega$ and $M$ is a graph on $\Omega$, the contact point $p$ must be interior in both surfaces. By the touching principle, the surfaces $M$ and $\s^2(r)$ agree on an open set around $p$, hence  $M$ is included in a sphere of radius $1/H$: this is a contradiction because the orthogonal projection onto $\r^2$ would give $\Omega\subset\overline{D}$.
\end{proof}

\section{The solvability techniques of the Dirichlet problem}\label{sec4}
 
 In this section, we present the method for solving the Dirichlet problem (\ref{eq1})-\eqref{eq1-2}, which  holds in the Euclidean and Lorentzian contexts.   We establish the solvability  of the Dirichlet problem    by applying   the   method of continuity  (\cite[Sec. 17.2]{gt}). The matrix  of the coefficients of second order of (\ref{eq1}) is 
 $$\left(\begin{array}{ll}1+\epsilon (D_2u)^2&-\epsilon D_1u D_2 u\\ -\epsilon  D_1u D_2u&1+\epsilon (D_1u)^2\end{array}\right). $$
 The minimum and maximum eigenvalues of this matrix are $\lambda=1$ and $\Lambda=1+|Du|^2$ if $\epsilon=1$ and $\lambda=1-|Du|^2$ and $\Lambda=1$ if $\epsilon=-1$. Thus if $\epsilon=-1$, the equation \eqref{eq1} is uniformly elliptic   provided $|Du|<1$ uniformly in $\Omega$.  
 
 For $t\in [0,1]$, define  the family of Dirichlet  problems    
 $$ \left\{\begin{array}{ll}
Q_t[u]=0& \mbox{in $\Omega$}\\
 u = 0 &\mbox{on $\partial\Omega,$}\\
 |Du|<1& \mbox{on $\Omega$}\quad (\mbox{if $\epsilon=-1$})
 \end{array}\right.$$
 where 
   $$Q_t[u]= (1+\epsilon |Du|^2)\Delta u-\epsilon D_iuD_juD_{ij}u- 2tH(1+\epsilon |Du|^2)^{3/2}.$$
A solution $u$ of $Q_t[u]=0$ describe a surface with constant mean curvature $tH$.    As usual, let 
$$\mathcal{A}=\{t\in [0,1]: \mbox{there exists } u_t\in C^{2,\alpha}(\overline{\Omega}),   Q_t[u_t]=0, {u_t}_{|\partial\Omega}=0\}.$$ 
The existence of solutions of the Dirichlet problem \eqref{eq1}-\eqref{eq1-2}-\eqref{eq1-3} is established if   $1\in \mathcal{A}$. For this purpose, we prove that $\mathcal{A}$ is a non-empty open and closed subset of $[0,1]$. We analyze these three issues.

\begin{enumerate}
\item  \emph{The set  $\mathcal{A}$ is not empty}. This is  because $u=0$ solves the Dirichlet problem for $t=0$. 
\item \emph{  The set $\mathcal{A}$ is open in $[0,1]$}. Given $t_0\in\mathcal{A}$, we need to prove that there exists $\eta>0$ such that $(t_0-\eta,t_0+\eta)\cap [0,1]\subset\mathcal{A}$. Define the map $T(t,u)=Q_t[u]$ for $t\in\r$ and $u\in  C^{2,\alpha}(\overline{\Omega})$. Then $t_0\in\mathcal{A}$ if and only if $T(t_0,u_{t_0})=0$. If we show that the derivative  of $Q_t$ with respect to $u$, say $(DQ_t)_u$, at the point $u_{t_0}$ is an isomorphism,  the Implicit Function Theorem ensures the existence of an open set $\mathcal{V}\subset C^{2,\alpha}(\overline{\Omega})$, with $u_{t_0}\in \mathcal{V}$ and a $C^1$ function $\psi:(t_0-\eta,t_0+\eta)\rightarrow \mathcal{V}$ for some $\eta>0$, such that $\psi(t_0)=u_{t_0}>0$ and  $T(t,\psi(t))=0$ for all $t\in (t_0-\eta,t_0+\eta)$: this guarantees that $\mathcal{A}$ is an open  set of  $[0,1]$.

The map $(DQ_t)_u$ is one-to-one if   for any $f\in C^\alpha(\overline{\Omega})$, there is a unique solution $v\in C^{2,\alpha}(\overline{\Omega})$ of the linear equation $L[v]:=(DQ_t)_u(v)=f$ in $\Omega$ and $v=0$ on $\partial\Omega$. The computation of $L$ will be  done in Theorem \ref{pr7}, obtaining  
$$L[v]=(DQ_t)_uv=a_{ij}D_{ij}v+b_iD_iv,$$
where $a_{ij}=a_{ij}(Du)$ is symmetric, $b_i=b_i(Du,D^2u)$ and $L$ is a {\it linear} elliptic operator whose term for the function $v$ is zero. Therefore the existence and uniqueness is assured by standard theory (\cite[Th. 6.14]{gt}).

\item \emph{ The set $\mathcal{A}$ is closed in $[0,1]$}. Let $\{t_k\}\subset\mathcal{A}$ with $t_k\rightarrow t\in [0,1]$. For each $k\in\mathbb{N}$, there is $u_k\in C^{2,\alpha}(\overline{\Omega})$  such that $Q_{t_k}[u_k]=0$ in $\Omega$ and $u_k=0$ in $\partial\Omega$. Define the set
$$\mathcal{S}=\{u\in C^{2,\alpha}(\overline{\Omega}):  \mbox{there exists } t\in [0,1]\mbox{ such that }Q_{t}[u]=0 \mbox{ in }\Omega, u_{|\partial\Omega}=0\}.$$
Then $\{u_k\}\subset\mathcal{S}$. If we see that the set $\mathcal{S}$ is bounded in $C^{1,\beta}(\overline{\Omega})$ for some $\beta\in[0,\alpha]$, and since $a_{ij}=a_{ij}(Du)$ in (\ref{eq4}), the Schauder theory proves that $\mathcal{S}$ is bounded in $C^{2,\beta}(\overline{\Omega})$, in particular, $\mathcal{S}$ is precompact in $C^2(\overline{\Omega})$  (Th. 6.6 and Lem. 6.36 in \cite{gt}). Hence there is a subsequence $\{u_{k_l}\}\subset\{u_k\}$ converging to some $u\in C^2(\overline{\Omega})$ in $C^2(\overline{\Omega})$. Since $T:[0,1]\times C^2(\overline{\Omega})\rightarrow C^0(\overline{\Omega})$ is continuous, we obtain  $Q_t[u]=T(t,u)=\lim_{l\rightarrow\infty}T(t_{k_l},u_{k_l})=0$ in $\Omega$. Moreover, $u_{|\partial\Omega}=\lim_{l\rightarrow\infty} {u_{k_l}}_{|\partial\Omega}=0$ on $\partial\Omega$, so $u\in C^{2,\alpha}(\overline{\Omega})$ and consequently, $t\in \mathcal{A}$. The set $\mathcal{S}$ is bounded in   $C^{1,\beta}(\overline{\Omega})$ if it is bounded in $C^1(\Omega)$, where   the norm is defined by 
$$
\|u_t\|_{C^1(\Omega)}=\sup_\Omega |u_t|+\sup_\Omega|Du_t|.
$$
Usually, the  {\it a priori}  estimates for $|u|$ are called height estimates and gradient estimates for $|Du|$.

Definitively,   $\mathcal{A}$ is  closed in $[0,1]$ provided we find two constants $M$ and $C$ independent on $t\in\mathcal{A}$,  such that  
\begin{equation}\label{duu}
\sup_\Omega |u_t|<M,\quad \sup_\Omega|Du_t|<C.
\end{equation}
Here we make the   observation that whereas in the Euclidean  space, the constant $C$ can take an arbitrary value,  the spacelike condition in the Lorentz-Minkowski space implies that $C$ may be chosen to be $C=1$. However, during the above process of the method of continuity,  we require that $Q_t$ is uniformly elliptic, in particular, we have to ensure that $|Du|<<1$  in $\Omega$.  Definitively, in $\l^3$, the constant $C$ in \eqref{duu} has to satisfy the condition  $C<1$.

\begin{remark}In the Euclidean case,  the smoothness of the solution on $\partial\Omega$ is guaranteed if the graph close to the boundary point does not blow-up at infinity, that is, $|Du|\not\rightarrow \infty$. In the Lorentzian case,  we have to prevent the possibility that $|D  u|\rightarrow 1$ as we go to $\partial\Omega$. The existence of the constant $C$ shows that the surface  cannot `go null' in the terminology of Marsden and Tipler \cite[p. 124]{mt}.
\end{remark}

\end{enumerate}

\section{Height and gradient estimates}\label{sec5}

Consider the Dirichlet problem for the cmc equation and arbitrary boundary values
   \begin{equation}\label{leq2}
\left\{\begin{array}{ll}
 \mbox{ div }\Big(\dfrac{D  u}{\sqrt{1+\epsilon |D  u|^2}}\Big) =2H & \mbox{in $\Omega$}\\
 u=\varphi & \mbox{on $\partial\Omega,$}
\end{array}
\right.\end{equation}
where, in addition, if $\epsilon=-1$, we suppose $|Du|<1$ in $\Omega$. In this section we investigate the problem of finding  estimates of $|u|$ and $|Du|$ for a  solution $u$ of (\ref{leq2}) in terms of the initial conditions. In Theorems \ref{pr2}, \ref{t3} and \ref{t4} we will derive the estimates for $|u|$. For the gradient estimates, we will prove that the supremum of $|Du|$ in $\Omega$ is attained at some boundary point (Theorem \ref{pr7}). 

We begin with the height estimates. The main difference between both ambient spaces is that in $\e^3$  there exist estimates of $\sup_\Omega |u|$ depending only on $H$ and $\varphi$, whereas  in $\l^3$  the size of the domain $\Omega$   appears in these estimates, such as shows the   hyperbolic planes (\ref{hp}).

The height estimates for cmc graphs in the Euclidean space are obtained with the functions 
$$f(p)=\langle p,a\rangle,\quad g(p)=\langle N(p),a\rangle,\quad p\in M,$$
where $a$ is a fixed unit vector of $\r^3$ and $N$ is the Gauss map of $M$. Firstly   we need to compute the Beltrami-Laplacian $\Delta_M$ of the functions $f$ and $g$.  The following result holds for cmc surfaces in $\e^3$ and in $\l^3$ without to be necessarily graphs: we refer the reader to \cite{lop} for a proof. 

\begin{lemma} Let $M$ be an immersed surface in $\r_\epsilon^3$. Then   
\begin{equation}\label{lo-eq1}
\Delta_M\langle p,a\rangle=2H\langle N,a\rangle.
\end{equation}
If, in addition, the immersion has constant mean curvature, then
\begin{equation}\label{lo-eq2}
\Delta_M\langle N,a\rangle+\epsilon |\sigma|^2\langle N,a\rangle=0,
\end{equation}
where $|\sigma|$ is the norm of the second fundamental form.
\end{lemma}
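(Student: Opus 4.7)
The plan is to exploit the orthogonal decomposition $a = a^T + \epsilon\langle a,N\rangle N$ of the constant ambient vector into tangential and normal parts along $M$, together with the Gauss and Weingarten formulas; both identities then reduce to divergence computations on $M$ for natural tangential vector fields. Throughout I will work in a local orthonormal frame $\{e_1,e_2\}$ of $TM$, which exists in both signatures because the induced metric on the spacelike surface is positive definite.

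For the first identity, I would begin by observing that differentiating $\langle p,a\rangle$ along a tangent vector $X$ gives $X\langle p,a\rangle = \langle X,a\rangle = \langle X,a^T\rangle$, so $\nabla_M\langle p,a\rangle = a^T$. Because $a$ is parallel in the flat ambient space, applying $\nabla^0$ to the decomposition and extracting the tangential component (using $\nabla^0_X N = -A(X)$, which follows from $\langle N,N\rangle = \epsilon$ being constant) yields
\[
\nabla_X a^T = \epsilon\langle a,N\rangle\,A(X).
\]
Tracing then gives $\Delta_M\langle p,a\rangle = \mathrm{div}_M(a^T) = \epsilon\langle a,N\rangle\,\mathrm{trace}(A) = 2H\langle N,a\rangle$, where the last equality uses the relation $H = \epsilon\tfrac{1}{2}\mathrm{trace}(A)$ recorded in the preliminaries.

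For the second identity, Weingarten directly yields $\nabla_M\langle N,a\rangle = -A(a^T)$, so $\Delta_M\langle N,a\rangle = -\mathrm{div}_M(A(a^T))$. I would split this via the Leibniz rule into
\[
\sum_i\langle(\nabla_{e_i}A)(a^T),e_i\rangle + \sum_i\langle A(\nabla_{e_i}a^T),e_i\rangle.
\]
The first sum I would rewrite using the Codazzi equation $(\nabla_X A)Y = (\nabla_Y A)X$ (valid in both $\e^3$ and $\l^3$ because $\r^3_\epsilon$ is flat) to obtain $a^T(\mathrm{trace}(A))$, which vanishes under the cmc hypothesis. The second sum, using $\nabla_X a^T = \epsilon\langle a,N\rangle A(X)$ from the previous step together with self-adjointness of $A$, equals $\epsilon\langle a,N\rangle\sum_i\langle A(e_i),A(e_i)\rangle = \epsilon|\sigma|^2\langle N,a\rangle$. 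Reassembling gives the claimed identity.

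The main obstacle is sign bookkeeping: the normal satisfies $\langle N,N\rangle = \epsilon$, and factors of $\epsilon$ appear simultaneously in the orthogonal decomposition of $a$, in the Gauss formula $\nabla^0_X Y = \nabla_X Y + \epsilon\langle A(X),Y\rangle N$, and in the normalization $H = \epsilon\tfrac{1}{2}\mathrm{trace}(A)$. The entire content of the argument is to track these factors so they cancel in a uniform way across the two ambient geometries; once organized correctly, the flatness of $\r^3_\epsilon$ (which feeds in through Codazzi) and the constancy of $H$ do the rest.
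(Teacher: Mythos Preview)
Your argument is correct. The decomposition $a=a^T+\epsilon\langle a,N\rangle N$, the computation $\nabla_X a^T=\epsilon\langle a,N\rangle A(X)$ from the Gauss formula, and the use of Codazzi to rewrite $\sum_i\langle(\nabla_{e_i}A)a^T,e_i\rangle$ as $a^T(\mathrm{trace}\,A)$ are all handled cleanly, and the $\epsilon$-bookkeeping checks out against the conventions fixed in the preliminaries (in particular $H=\epsilon\tfrac12\mathrm{trace}\,A$, so $\epsilon\langle a,N\rangle\,\mathrm{trace}\,A=2H\langle N,a\rangle$ with no leftover sign).

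As for comparison with the paper: the paper does not supply its own proof of this lemma but simply refers the reader to \cite{lop}. Your computation is exactly the standard derivation one finds in that reference and in the literature, so there is nothing to contrast; you have in effect filled in the omitted argument.
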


Consider $u$ be a solution of \eqref{leq2} and let $M=\mbox{graph}(u)$.  If we take $a=e_3=(0,0,1)$, the functions $\langle p,e_3\rangle$ and  $\langle N,e_3\rangle$ inform about $u$ and $Du$ because
\begin{equation}\label{ifg}
\langle p,e_3\rangle=\epsilon u,\quad \langle N,e_3\rangle=\frac{\epsilon}{\sqrt{1+\epsilon|Du|^2}}.
\end{equation}
In particular, $\mbox{sign}(g)=\mbox{sign}(\epsilon)$. Suppose $H\geq 0$. Then $\Delta_M f\geq 0$ (resp. $\leq 0$) in $\e^3$ (resp. $\l^3$) and the maximum principle implies
$\langle p,e_3\rangle \leq \max_{\partial\Omega}\langle p,e_3\rangle$ in $\e^3$  (resp. $\langle p,e_3\rangle \geq \min_{\partial\Omega}\langle p,e_3\rangle$ in $\l^3$). Thus  $u\leq\max_{\partial\Omega}u$ in both ambient spaces. On the other hand
$$\Delta_M(Hf+\epsilon g)=(2H^2-|\sigma|^2)g\quad\left\{\begin{array}{ll}
\leq 0 &\epsilon=1\\
\geq 0 &\epsilon=-1.\end{array}\right.$$
Since $|\sigma|^2=\kappa_1^2+\kappa_2^2\geq 2H^2$, the maximum principle yields
$$ Hf+\epsilon g \left\{\begin{array}{ll}
\geq \min_{\partial\Omega}Hf+  g &\epsilon=1\\
\leq \max_{\partial\Omega} Hf- g &\epsilon=-1.\end{array}\right.$$

In case  $\epsilon=1$, we have 
$$Hu+\langle N,e_3\rangle\geq H\min_{\partial\Omega}u+\min_{\partial\Omega}\langle N,e_3\rangle\geq H\min_{\partial\Omega}u$$
because $\langle N,e_3\rangle\geq  0$. Since $\langle N,e_3\rangle\leq 1$, we deduce  $u\geq -1/H +\min_{\partial\Omega}\varphi $.

\begin{theorem} \label{pr2}
A solution $u$ of (\ref{leq2}) in the Euclidean  space satisfies
$$\min_{\partial\Omega}\varphi-\frac{1}{H}\leq u\leq \max_{\partial\Omega}\varphi, \quad \mbox{if $H>0$}$$
$$\min_{\partial\Omega}\varphi\leq u\leq   \max_{\partial\Omega}\varphi-\frac{1}{H}, \quad \mbox{if $H<0.$}$$
\end{theorem}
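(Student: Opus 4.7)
The plan is to run a maximum-principle argument on the graph $M=\mathrm{graph}(u)$, applied to the two natural geometric functions $f(p)=\langle p,e_3\rangle$ and $g(p)=\langle N(p),e_3\rangle$. Via \eqref{ifg} these encode $u$ and (up to the orientation) the angle of the graph with the horizontal, and their intrinsic Laplacians are controlled by \eqref{lo-eq1} and \eqref{lo-eq2}. Under the orientation \eqref{normal}, one has $g=1/\sqrt{1+|Du|^2}\in(0,1]$ throughout $M$, so both auxiliary functions have a definite sign, which is what makes the argument go through.

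For $H>0$ the upper bound is immediate: by \eqref{lo-eq1}, $\Delta_M f=2Hg\geq 0$, so $f$ is subharmonic on $M$ and the Hopf maximum principle gives $u=f\leq \max_{\partial\Omega}\varphi$. For the lower bound I would introduce the auxiliary function $w=Hf+g$; combining \eqref{lo-eq1} and \eqref{lo-eq2} yields
$$\Delta_M w=(2H^2-|\sigma|^2)g,$$
and the elementary inequality $|\sigma|^2=\kappa_1^2+\kappa_2^2\geq 2H^2$ together with $g>0$ makes $w$ superharmonic. The minimum principle then delivers
$$Hu+g \;\geq\; \min_{\partial\Omega}(H\varphi+g) \;\geq\; H\min_{\partial\Omega}\varphi,$$
where the last step uses $g\geq 0$ on $\partial\Omega$. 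Since $g\leq 1$ on the left-hand side and $H>0$, dividing gives $u\geq \min_{\partial\Omega}\varphi-1/H$.

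For $H<0$ I would avoid redoing the computation and instead exploit the symmetry $u\mapsto -u$: from the divergence form \eqref{leq1} one checks directly that if $u$ solves \eqref{leq2} with data $(H,\varphi)$, then $\tilde u=-u$ solves the same problem with data $(-H,-\varphi)$. Since $-H>0$, applying the estimates just proved to $\tilde u$ and rearranging produces exactly the inequalities stated for negative mean curvature.

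The only real subtlety is keeping the sign of the angle function $g$ pinned down under the fixed orientation \eqref{normal}; once this is in place, both bounds collapse to one-line applications of the maximum principle to $f$ and to the well-chosen combination $Hf+g$, and no further global information about $\Omega$ (its size, convexity, etc.) is needed — which is precisely the Euclidean phenomenon the theorem is meant to highlight.
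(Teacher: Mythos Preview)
Your proof is correct and follows essentially the same route as the paper: both use the height function $f=\langle p,e_3\rangle$ and the angle function $g=\langle N,e_3\rangle$, apply \eqref{lo-eq1} to get the one-sided bound from subharmonicity of $f$, and then use the superharmonic combination $Hf+g$ together with $|\sigma|^2\ge 2H^2$ and $0<g\le 1$ to obtain the other bound. Your explicit reduction of the case $H<0$ to $H>0$ via the substitution $u\mapsto -u$ is the natural complement; the paper simply leaves this symmetry to the reader.
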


We analyze the same argument in $\l^3$.   The reverse Cauchy-Schwarz inequality for timelike vectors yields   $\langle N,e_3\rangle\leq -1$ (\cite{lo00}). Then the same computation gives
$$-Hu+\langle N,e_3\rangle\leq H\max_{\partial\Omega}(-u)+\max_{\partial\Omega}\langle N,e_3\rangle\leq -H\min_{\partial\Omega}u-1,$$
{\it but it is not possible to bound from below} because of the function $\langle N,e_3\rangle$.   This makes a key difference with the Euclidean case and concludes that  the argument done in the Euclidean  space is not valid in $\l^3$. If $H=0$, from \eqref{lo-eq1} we deduce:

\begin{corollary} In both ambient spaces, if $u$ is a solution of (\ref{leq2}) for $H=0$ then
$$\min_{\partial\Omega}\varphi\leq u\leq\max_{\partial\Omega}\varphi.$$
\end{corollary}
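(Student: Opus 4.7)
The plan is to invoke the formula (\ref{lo-eq1}) with $H=0$ and $a=e_3$, reducing the corollary to the maximum principle for the Beltrami-Laplacian on the graph $M=\mathrm{graph}(u)$.

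First I would set $f(p)=\langle p,e_3\rangle$ on $M$. By the identity (\ref{ifg}), $f=\epsilon u$ (viewing $u$ as a function on $M$ via the vertical projection $\pi\colon M\to\Omega$), so the bounds for $u$ are equivalent to bounds for $f$. By Lemma  (\ref{lo-eq1}), since $H=0$, we have
\begin{equation*}
\Delta_M f = 2H\,\langle N,e_3\rangle = 0 \quad\text{on } M.
\end{equation*}
Hence $f$ is harmonic for the intrinsic Beltrami-Laplacian on $M$. This works uniformly in $\epsilon$; the spacelike hypothesis when $\epsilon=-1$ is what guarantees that $M$ is a Riemannian surface and that $\Delta_M$ is a genuine elliptic operator, so the maximum principle applies.

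Next I would observe that $M$ is compact (as the graph over the bounded domain $\overline{\Omega}$) with boundary $\partial M = \pi^{-1}(\partial\Omega) = \{(x,\varphi(x)) : x\in\partial\Omega\}$. The strong maximum principle for the elliptic operator $\Delta_M$ then gives
\begin{equation*}
\min_{\partial M} f \;\leq\; f \;\leq\; \max_{\partial M} f \quad\text{on } M.
\end{equation*}
Reading this back through $f=\epsilon u$ and noting that $\epsilon u = \epsilon\varphi$ on $\partial M$, the inequality becomes $\min_{\partial\Omega}(\epsilon\varphi)\leq \epsilon u\leq \max_{\partial\Omega}(\epsilon\varphi)$, which in either sign of $\epsilon$ is exactly $\min_{\partial\Omega}\varphi\leq u\leq\max_{\partial\Omega}\varphi$.

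There is no real obstacle here: the only subtle point is the one already highlighted in the text, namely that the Euclidean argument failed for $H\neq 0$ in $\l^3$ because the normal term $\langle N,e_3\rangle$ had the wrong sign/bound, whereas for $H=0$ that term is annihilated and the discrepancy disappears. So the same line of reasoning covers both ambient spaces simultaneously.
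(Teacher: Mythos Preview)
Your proposal is correct and follows essentially the same approach as the paper: the paper simply states that the corollary follows from \eqref{lo-eq1}, and you have spelled out precisely that argument---with $H=0$ the function $f=\langle p,e_3\rangle=\epsilon u$ is $\Delta_M$-harmonic, so the maximum principle yields both inequalities at once in either ambient space.
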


As expected, in the Lorentz-Minkowski space  there does not exist height estimates  depending only on $H$ and $\varphi$. An example is the following. For $r>0$ and $m>r$, let 
$u^m(x_1,x_2)=\sqrt{r^2+x_1^2+x_2^2}-m$ defined in the round disc $\Omega_{\sqrt{m^2-r^2}}=\{(x_1,x_2)\in\r^2:x_1^2+x_2^2<m^2-r^2\}$. The graph of $u^m$ is a piece of the hyperbolic plane $\h^2(r)$ which has been   displaced vertically downwards a distance  equal to $m$. Then $u^m$ is a solution of  (\ref{leq2}) in $\Omega_{\sqrt{m^2-r^2}}$ with $\varphi=0$ and the height on $u^m$, namely $|u^m|=m-r$,   goes to $\infty$ as $m\nearrow\infty$.

Motivated by these examples, we will deduce   height estimates   for a solution of (\ref{leq2}) in terms of the size of $\Omega$ (see \cite{lo16} for a height estimate in terms of the area of the surface). The estimates that we will deduce  are of two types: the first ones are given in terms of the diameter   of $\Omega$ and second ones depend on the width  of narrowest strip containing $\Omega$.

\begin{theorem}\label{t3}
 If $u$ be a solution of (\ref{leq2}) in   $\l^3$, then  
\begin{equation}\label{eqdia}\min_{\partial\Omega}\varphi-
\frac{1}{|H|}\left(\sqrt{1+\frac{\mbox{diam}(\Omega)^2 H^2}{4}}-1\right)\leq
u\leq \max_{\partial\Omega} \varphi+
\frac{1}{|H|}\left( \sqrt{1+\frac{\mbox{diam}(\Omega)^2 H^2}{4}}-1\right)
\end{equation}
and equality holds if and only if the graph of $u$ describes a hyperbolic cap. In the particular case   $\varphi=0$,   we have
$$\sup_\Omega|u|\leq \frac{1}{|H|}\left( \sqrt{1+\frac{\mbox{diam}(\Omega)^2H^2}{4}}-1\right).$$

\end{theorem}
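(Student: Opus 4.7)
The idea is to compare $u$ with translates of the hyperbolic plane $\h^2(1/|H|)$, the prototypical entire spacelike CMC graph of Example~\ref{ex1}, via the comparison principle. Equation \eqref{leq2} and the spacelike constraint are invariant under $u\mapsto -u$, $H\mapsto -H$, and the bound is symmetric in $|H|$; hence it suffices to establish one of the two inequalities, and I focus on the lower bound in the case $H>0$. In this case the \emph{upper} bound is actually trivial: for any $x_*\in\r^2$, the upper-sheet cap $w(x)=\max_{\partial\Omega}\varphi+\sqrt{r^2+|x-x_*|^2}-r$ (with $r=1/H$) has mean curvature $H$ and satisfies $w\geq\max_{\partial\Omega}\varphi\geq\varphi$ on $\partial\Omega$, so the comparison principle yields $u\leq w$; choosing $x_*=x$ pointwise gives $u(x)\leq\max_{\partial\Omega}\varphi$, strictly sharper than the stated upper bound.

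For the lower bound at an interior point $x_0\in\Omega$, I would use the two-parameter family of upper-sheet caps
\[
w_{x_*,C}(x) := C+\sqrt{r^2+|x-x_*|^2}, \qquad x_*\in\r^2,\; C\in\r,
\]
each a CMC graph in $\l^3$ with mean curvature $H=1/r$. For fixed $x_*$ set
\[
C(x_*):=\min_{y\in\partial\Omega}\bigl[\varphi(y)-\sqrt{r^2+|y-x_*|^2}\bigr],
\]
so that $w_{x_*,C(x_*)}\leq \varphi$ on $\partial\Omega$. Since $w_{x_*,C(x_*)}$ and $u$ have the same mean curvature and the boundary comparison holds, the comparison principle yields $w_{x_*,C(x_*)}\leq u$ throughout $\Omega$, and evaluating at $x_0$ gives
\[
u(x_0)\;\geq\;\min_{\partial\Omega}\varphi\;-\;\Bigl(\max_{y\in\partial\Omega}\sqrt{r^2+|y-x_*|^2}\;-\;\sqrt{r^2+|x_0-x_*|^2}\Bigr).
\]
The proof then reduces to a purely geometric optimization: select $x_*$ as a function of $x_0$ so that the parenthesized quantity is at most $\sqrt{r^2+\mathrm{diam}(\Omega)^2/4}-r = \frac{1}{|H|}\bigl(\sqrt{1+\mathrm{diam}(\Omega)^2 H^2/4}-1\bigr)$. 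Using the diameter bound $|y-y'|\leq \mathrm{diam}(\Omega)$ for $y,y'\in\overline{\Omega}$ and positioning $x_*$ along a line through $x_0$ aligned with a diameter chord of $\Omega$, the required estimate follows from the monotonicity of $s\mapsto\sqrt{r^2+s^2}$.

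I expect the main obstacle to be extracting the sharp factor $\mathrm{diam}(\Omega)/2$ rather than a larger circumradius such as the $\mathrm{diam}(\Omega)/\sqrt{3}$ afforded by Jung's theorem: this requires exploiting the fact that the extremal case is the hyperbolic cap on a disk of radius $\mathrm{diam}(\Omega)/2$, which dictates the optimal positioning of $x_*$. The rigidity statement is handled by the touching principle (Proposition~\ref{p-tan}): if equality is attained at an interior $x_0$, then the extremal barrier $w_{x_*,C(x_*)}$ and $u$ have matching values and tangent planes at $(x_0,u(x_0))$ with $w\leq u$ nearby, so the two CMC surfaces coincide on an open neighborhood and, by unique continuation for the CMC equation, on all of $\Omega$; hence $u$ is a hyperbolic cap, and the converse is a direct computation. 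The estimate for $\varphi\equiv 0$ then follows simply by combining the two inequalities.
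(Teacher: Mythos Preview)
Your strategy---comparison with vertically translated hyperbolic caps $\h^2(1/|H|)$ via the comparison/touching principle---is the paper's. The paper fixes the cap's horizontal centre at the centre of a disk enclosing $\Omega$ and phrases the comparison as a sliding argument (lower the cap below $M$, raise it until first contact); this is your choice of $C(x_*)$ for that particular fixed $x_*$. You allow $x_*$ to vary with $x_0$, which is a priori more flexible. Your remark that for $H>0$ one actually has the sharper one-sided bound $u\le\max_{\partial\Omega}\varphi$ is correct and appears in the paper just before Theorem~\ref{pr2}.

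The obstacle you single out is real and is not overcome by either argument. Take $\Omega$ the equilateral triangle of side $d$ (so $\mathrm{diam}(\Omega)=d$) and $x_0$ its centroid. Using $\sum_i V_i=0$ for the vertices $V_i$ one checks $\max_{y\in\partial\Omega}|y-x_*|^2-|x_0-x_*|^2\ge d^2/3$ for every $x_*\in\r^2$, with equality only at $x_*=x_0$; it follows that for all $r$ exceeding a fixed multiple of $d$ the infimum over $x_*$ of your parenthesised quantity equals $\sqrt{r^2+d^2/3}-r$, strictly larger than the target $\sqrt{r^2+d^2/4}-r$. So no positioning of the barrier delivers the constant $\mathrm{diam}(\Omega)/2$, and the vague ``positioning $x_*$ along a line through $x_0$ aligned with a diameter chord'' cannot be completed. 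The paper's proof meets the same issue at its first line, ``there is no loss of generality in assuming that $\Omega$ is included in the closed disk $D_\rho$ of radius $\rho=\mathrm{diam}(\Omega)/2$'', which already fails for this triangle. Both arguments become rigorous if $\mathrm{diam}(\Omega)/2$ is replaced by the circumradius of $\Omega$---in your version, simply take $x_*$ to be the circumcentre independently of $x_0$---and the rigidity clause then follows exactly as you outline. Your proposal is therefore faithful to the paper and correct modulo the same constant.
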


\begin{proof} The inequalities  are obtained by comparing $M=\mbox{graph}(u)$  with hyperbolic caps with mean curvature $|H|$ coming from below and from above. There is no loss of generality in assuming that
 $\Omega$ is included in the closed disk $D_\rho$ of center the origin and radius $\rho=\mbox{diam}(\Omega)/2$. Consider the hyperbolic plane $\h^2(r)$ defined by the function   $u(x_1,x_2)= \sqrt{r^2+x_1^2+x_2^2}$,  where $r=1/|H|$. 
 
 Let us take $\h^2(r;s)$ the compact part  obtained  when we intersect  $\h^2(r)$  with the horizontal plane of equation $x_3=s$. Then    $\partial \h^2(r;s)$ is a circle of radius $\rho$, with   $s=\sqrt{\rho^2+r^2}$ and 
 $$\h^2(r;s)=\{(x_1,x_2,x_3)\in\h^2(r): x_3\leq s\}.$$
  Move vertically down   $\h^2(r;s)$    until to be disjoint from   $M$. Next move upwards $\h^2(r;s)$  until  that $\h^2(r;s)$ touches $M$ the first time. If the contact between both surfaces occurs at some common interior point, the comparison principle and then the touching principle implies that  $u$ describes part of the hyperbolic plane $\h^2(r;s)$. In such a case, the left inequality of (\ref{eqdia}) holds trivially. 
 
 In case that the first contact occurs between a point of $\h^2(r;s)$ with a boundary  point of $M$,   we can arrive until the value  $s=\min_{\partial\Omega}\varphi$, hence
 $$ \min_{\partial\Omega}\varphi-\sqrt{r^2+\rho^2}+\sqrt{r^2+x_1^2+x_2^2}\leq u\quad\mbox{in $\Omega$.}$$
 Evaluating at the origin,  
 $$ \min_{\partial\Omega}\varphi-\frac{1}{|H|}-\sqrt{\frac{1}{H^2}+\rho^2}\leq u\quad\mbox{in $\Omega$,}$$
 which coincides with the left inequality in (\ref{eqdia}) because $\rho=\mbox{diam}(\Omega)/2$.

 The right hand inequality in \eqref{eqdia} is proved with a similar argument by taking the hyperbolic planes $u(x_1,x_2)=-\sqrt{r^2+x_1^2+x_2^2}$.  
\end{proof}

A second height estimate can be deduced   by comparing   $u$ with   spacelike cylinders. We need to introduce the following notation. Given a bounded domain $A\subset\r^2$, consider the set $\mathcal{L}$ of all pairs  of parallel straight-lines $(L_1,L_2)$ in $\r^2$  such that $A$ is included in the planar strip   determined by $L_1$ and $L_2$. Set
$$\Theta(A)=\min\{\mbox{dist}(L_1,L_2): (L_1,L_2)\in \mathcal{L}\}.$$
Observe that the domain $A$ is included in a strip   of width $\Theta(\Omega)$ and this strip is the narrowest one among all strips containing $A$ in its interior. Notice also that $\Theta(A)\leq\delta(A)$.

\begin{theorem} \label{t4} If $u$ is a solution of (\ref{leq2}) in  $\l^3$, then 
\begin{equation}\label{es-lo-cy}\min_{\partial\Omega}\varphi-
\frac{1}{2|H|}\left(\sqrt{1+\Theta(\Omega)^2 H^2}-1\right)\leq u\leq \max_{\partial\Omega} \varphi+ \frac{1}{2|H|}\left(\sqrt{1+\Theta(\Omega)^2 H^2}-1\right).
\end{equation}
In the particular case   $\varphi=0$,   we have
$$\sup_\Omega|u| \leq \frac{1}{2|H|}\left(\sqrt{1+\Theta(\Omega)^2 H^2}-1\right).$$
\end{theorem}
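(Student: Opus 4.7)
The strategy mirrors the proof of Theorem \ref{t3}, but replaces hyperbolic caps by pieces of \emph{spacelike cylinders} of mean curvature $|H|$, as cylinders are adapted to the width of a strip rather than the diameter of a disk.

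First, by a horizontal rotation and translation of $\l^3$, I would assume $\Omega \subset \{(x_1,x_2) : |x_1| \leq \Theta(\Omega)/2\}$. Set $r = 1/(2|H|)$ and consider the entire graph $v(x_1,x_2) = \sqrt{r^2+x_1^2}$, which by Example \ref{ex1}(3) describes the lower half of the spacelike circular cylinder of axis $(0,1,0)$ and radius $r$, with constant mean curvature $|H|$.

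Next, for $t \in \r$, consider the translated graph $t + v$. Since $\overline\Omega$ is compact and $u - v$ is continuous on $\overline\Omega$, the maximal $t_*$ for which $t+v \leq u$ on $\overline\Omega$ exists and equals $\min_{\overline\Omega}(u - v)$; the graph of $t_* + v$ meets $M = \mbox{graph}(u)$ at a point $p_0$ whose horizontal projection $(x_1^0, x_2^0)$ attains this minimum. If $(x_1^0, x_2^0)$ is interior to $\Omega$, the touching principle (Proposition \ref{p-tan}) together with real analyticity of cmc surfaces yields $u = t_* + v$ globally on $\Omega$, and the desired inequality (\ref{es-lo-cy}) can be verified directly from this explicit formula. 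Otherwise $(x_1^0, x_2^0) \in \partial \Omega$, so that $\varphi(x_1^0, x_2^0) = t_* + \sqrt{r^2+(x_1^0)^2}$. Combining the pointwise bound $u(x_1,x_2) \geq t_* + v(x_1, x_2) \geq t_* + r$ with $|x_1^0| \leq \Theta(\Omega)/2$ gives
\[
u(x_1,x_2) \;\geq\; \min_{\partial\Omega}\varphi - \Bigl(\sqrt{r^2 + \tfrac{\Theta(\Omega)^2}{4}} - r\Bigr),
\]
which upon inserting $r = 1/(2|H|)$ is the left-hand inequality in (\ref{es-lo-cy}).

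The right-hand inequality follows from the symmetric sliding argument from above using the reflected cylinder $-\sqrt{r^2+x_1^2}$, and the bound on $\sup_\Omega|u|$ in the case $\varphi = 0$ is immediate. I do not anticipate serious obstacles: the compactness of $\overline\Omega$ makes the sliding procedure clean, and since both surfaces share the same horizontal projection $\overline\Omega$, no lateral boundary collisions need to be tracked (unlike for the hyperbolic cap in Theorem \ref{t3}, which had its own circular boundary). The only slightly delicate point is the use of analyticity in the interior-contact case to conclude that $M$ is globally part of a translated cylinder; this is the same observation already used in the proof of Theorem \ref{t3}.
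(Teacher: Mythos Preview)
Your proposal is correct and follows essentially the same approach as the paper: slide vertical translates of the spacelike cylinder $v(x_1,x_2)=\sqrt{r^2+x_1^2}$ with $r=1/(2|H|)$ from below until first contact, then distinguish interior contact (coincidence with the cylinder) from boundary contact (which yields the estimate via $|x_1^0|\le\Theta(\Omega)/2$). The only cosmetic difference is that you work with the entire graph of $v$ and phrase the sliding as taking $t_*=\min_{\overline\Omega}(u-v)$, whereas the paper truncates the cylinder to $C(r;s)$ over the strip of width $\Theta(\Omega)$ and describes the motion geometrically; your formulation is slightly cleaner since no lateral boundary of the barrier needs to be tracked.
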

Notice that the estimates  (\ref{es-lo-cy}) and \eqref{eqdia} are not comparable.
 
 \begin{proof}
 The argument is similar to the proof of Theorem \ref{t3} by replacing  the role of the hyperbolic planes by cylinders. 
 After a rigid motion if necessary, assume that $\Omega$ is included in the strip  $|x_1|<\Theta(\Omega)/2$. Consider the cylinder $C(r)$ 
 $$u(x_1, x_2)=\sqrt{r^2+x_1^2},$$
where $r=1/(2|H|)$. Consider the value $s$ such that the intersection of $C(r)$ with the plane of equation $x_3=s$ is formed by    two parallel straight-lines separated a distance equal to $\Theta(\Omega)$: this occurs when the value $s$ is 
$$s= \sqrt{r^2+\frac{\Theta(\Omega)^2}{4}}.$$
 Denote by $C(r;s)$ the part of $C(r)$ below the plane of equation $x_3=s$, which is a graph on a strip of width $\Theta(\Omega)$.   Let us move down the cylinders $C(r;s)$   until that do not intersect $M=\mbox{graph}(u)$. After, we move upwards $C(r;s)$   until the first touching point with $M$. If this point is a common interior point, then $M$ is included in the cylinder $C(r)$ and the left inequality in (\ref{es-lo-cy}) is trivially satisfied. If the point is not interior, we can arrive until the height $x_3=s$ where $s=\min_{\partial\Omega}\varphi$. Then 
 $$\min_{\partial\Omega}\varphi-\sqrt{r^2+\frac{\Theta(\Omega)^2}{4}}+\sqrt{r^2+x_1^2}\leq u \quad\mbox{in $\Omega.$}$$
 At the points $x_1=0$, we deduce
$$\min_{\partial\Omega}\varphi+r-\sqrt{r^2+\frac{\Theta(\Omega)^2}{4}}\leq u\quad\mbox{in $\Omega.$}$$
This inequality is just the left inequality in (\ref{es-lo-cy}). The right inequality in (\ref{es-lo-cy}) is proved  by comparing with the cylinders 
$u(x_1, x_2)=-\sqrt{r^2+x_1^2}$. 
 \end{proof}

 We finish this section investigating how to derive  the  {\it a priori}  estimates \eqref{duu} of $|Du|$ in $\Omega$. Recall that  we have to find a constant $C$ depending only on the initial data such that $|Du|\leq C$ in $\Omega$, with the observation that if $\epsilon=-1$, we require that $C<1$.  We will prove that it suffices to find this estimate only in boundary points. We   present two proofs of this result which hold in both ambient spaces.

\begin{theorem}\label{pr7} If $u$ is a solution of (\ref{leq2}), then
\begin{equation}\label{ingra}
\sup_{\Omega} |Du|=\max_{\partial\Omega}|Du|.
\end{equation}
\end{theorem}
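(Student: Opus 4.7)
The plan is to exploit the PDE (\ref{lo-eq2}) satisfied on any cmc surface by the support-like function $g=\langle N,e_3\rangle$, together with the identity $g=\epsilon/\sqrt{1+\epsilon|Du|^2}$ from (\ref{ifg}). In the Euclidean case $g$ is positive and strictly decreasing in $|Du|$; in the Lorentzian case $g$ takes values in $(-\infty,-1]$ and is again strictly decreasing in $|Du|$ (as $|Du|\nearrow 1$, $g\to -\infty$). So in either signature, $\sup_\Omega|Du|$ is attained precisely where $g$ attains its minimum on $M=\mathrm{graph}(u)$, and it suffices to show that $g$ achieves its minimum on $\partial M$.

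Because $H$ is constant, (\ref{lo-eq2}) gives $\Delta_M g=-\epsilon|\sigma|^2 g$, and the product $-\epsilon g$ is non-positive in both ambient spaces: for $\epsilon=1$ we have $g>0$, while for $\epsilon=-1$ the reverse Cauchy--Schwarz inequality applied to the timelike vectors $N$ and $e_3$ forces $g\leq -1<0$ (this is exactly the computation used just before Theorem \ref{pr2}). Hence $\Delta_M g\leq 0$ on $M$ in both settings. The induced metric on $M$ is Riemannian (automatically in $\e^3$, and by the spacelike hypothesis $|Du|<1$ in $\l^3$), so $\Delta_M$ is a genuinely elliptic operator on $M$. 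The weak maximum principle then forces the minimum of $g$ over $\overline{M}$ to be attained at some point of $\partial M$, and since the vertical projection $\pi:\overline{M}\to\overline{\Omega}$ is a diffeomorphism carrying $\partial M$ onto $\partial\Omega$, transferring back to $\Omega$ yields \eqref{ingra}.

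For the second proof announced in the statement I would take the analytic PDE route: differentiating the quasilinear operator $Q[u]$ in (\ref{eq4}) with respect to $x_k$ produces a linear elliptic equation for each first derivative $w_k:=D_k u$, with coefficients depending on $Du$ and $D^2u$ but \emph{no} zero-order term. A short manipulation then shows that $w:=|Du|^2$ (or, equivalently, $\sqrt{1+\epsilon|Du|^2}$) is a subsolution of a linear elliptic operator without zero-order term, and the classical weak maximum principle of \cite{gt} gives \eqref{ingra} directly.

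The main technical point in both approaches is verifying ellipticity of the derived operator in the Lorentzian setting, which rests on the uniform spacelike condition $|Du|<1$ that makes the minimum eigenvalue $\lambda=1-|Du|^2$ strictly positive on $\overline{\Omega}$; in the Euclidean case this is automatic. Once that is in place, no further estimate on $|\sigma|^2$ is required, and no hypothesis on the sign of $H$ is needed either.
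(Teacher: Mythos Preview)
Your proposal is correct and matches the paper's two proofs, though you present them in the opposite order: your geometric argument via $g=\langle N,e_3\rangle$ and the superharmonicity $\Delta_M g\leq 0$ is exactly the paper's Proof~2, and your analytic route of differentiating $Q[u]$ to obtain a linear elliptic equation for $D_ku$ with no zero-order term is the paper's Proof~1. Your version of the analytic argument is in fact slightly tighter than the paper's, since you explicitly pass to $|Du|^2$ as a subsolution rather than arguing separately about each $|D_ku|$.
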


\begin{proof}[Proof 1]
For each $i=1,2$, define the functions  $v^i=D_iu$. Differentiate (\ref{eq4}) with respect to the variable $x_k$, $k\in\{1,2\}$. After some computations, we obtain  
 \begin{equation}\label{eq3}
 \left((1+\epsilon |Du|^2)\delta_{ij}- \epsilon D_iuD_ju\right)D_{ij} v^k+2\left(\epsilon D_iu\Delta u+3H(1-|Du|^2) D_iu -\epsilon D_juD_{ij}u\right)D_iv^k=0.
 \end{equation}
  Hence $v^k$ satisfies a linear elliptic equation of type
  $$a_{ij}D_{ij}v^k+b_i D_iv^k=0,$$
  where $a_{ij}=a_{ij}(Du)$ and $b_i=b_i(Du,D^2u)$. By the maximum principle,  $|v^k|$ has not a maximum at some interior point. Consequently,   the maximum of $|Du|$ on the compact set $\overline{\Omega}$ is attained at some boundary point.
 \end{proof}
 
 \begin{proof}[Proof  2 ] Estimates of $|D  u|$  are obtained by means of the function $\langle N,e_3\rangle$  because 
 \eqref{ifg}.  From equation   \eqref{lo-eq2} 
 $$\Delta_M\langle N,e_3\rangle=-\epsilon|\sigma|^2\langle N,e_3\rangle =\frac{|\sigma|^2}{\sqrt{1+ \epsilon |Du|^2}}\leq 0,$$
 and   the maximum principle implies
 $$\inf_{\Omega}\langle N,e_3\rangle=\min_{\partial \Omega}\langle N,e_3\rangle.$$
Thus 
 $$\inf_{\Omega}\frac{\epsilon }{\sqrt{1+\epsilon |Du|^2}}=\min_{\partial \Omega}\frac{\epsilon }{\sqrt{1+\epsilon |Du|^2}},$$
 which is equivalent to (\ref{ingra}).
 \end{proof}
 
 To summarize, the problem of finding gradient estimates of $|Du|$ in $\Omega$ is passing to a problem of estimates along the boundary, exactly, finding a constant $C$ depending only on the initial data such that 
 \begin{equation}\label{gra}
 \max_{\partial\Omega}|Du|<C.
 \end{equation}

 In the proofs of the existence results in the following sections, the method to obtain the constant $C$ in (\ref{gra}) is by an argument of super and subsolutions and then we apply  the next result.

\begin{lemma}\label{le1} 
Let $x_0\in\partial\Omega$ be a boundary point. Suppose that there is a neighborhood $\mathcal{U}$ of $x_0$   and two functions $w^{+}, w^{-}\in C^2(\overline{\Omega\cap\mathcal{U}})$ such that 
$$
\begin{array}{lll}
&Q[w^+]\leq 0\leq Q[w^-]&\mbox{in $\Omega\cap U$}\\
&w^{-}\leq u\leq w^{+}&\mbox{in $\partial(\Omega\cap U)$}\\
&w^{-}(x_0)=u(x_0)=w^{+}(x_0)&\\
&|Dw^{-}|,|Dw^{+}|\leq C.&
\end{array}$$
Then $|Du|\leq C$. 
\end{lemma}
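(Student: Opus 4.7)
The plan is to use the two inequalities $Q[w^+]\le 0 = Q[u] \le Q[w^-]$ inside $\Omega\cap\mathcal{U}$ to sandwich $u$ between $w^-$ and $w^+$ via the comparison principle, and then exploit the three-fold coincidence $w^-(x_0)=u(x_0)=w^+(x_0)$ to pin down $Du(x_0)$. Bounding the inward-normal derivative of $u$ at $x_0$ will come from the differential inequality satisfied by $u-w^\pm$, while the tangential derivative of $u$ at $x_0$ will be forced to agree with the tangential derivatives of the barriers.

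First I would apply the comparison principle (the proposition stated earlier in the paper) twice on the bounded domain $\Omega\cap\mathcal{U}$, using the assumption $w^-\le u\le w^+$ on $\partial(\Omega\cap\mathcal{U})$ together with $Q[w^+]\le Q[u]\le Q[w^-]$; this yields
\[
w^- \le u \le w^+ \quad \text{in } \Omega\cap\mathcal{U}.
\]
In the Lorentzian case one additionally needs $w^\pm$ to be spacelike so that $Q$ is genuinely quasilinear elliptic on them; this is part of the working hypothesis that the barriers be admissible, and it should be flagged but not further discussed.

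Next I would fix the inward unit normal $\nu$ and a unit tangent $\tau$ to $\partial\Omega$ at $x_0$. Since $u-w^-\ge 0$ in $\Omega\cap\mathcal{U}$ and vanishes at $x_0$, the function $t\mapsto u(x_0+t\nu)-w^-(x_0+t\nu)$ is non-negative for small $t>0$ and zero at $t=0$, so its right derivative at $0$ is non-negative, giving $D_\nu u(x_0)\ge D_\nu w^-(x_0)$. The symmetric argument with $w^+-u$ yields $D_\nu u(x_0)\le D_\nu w^+(x_0)$, so
\[
|D_\nu u(x_0)| \le \max\bigl(|D_\nu w^-(x_0)|,|D_\nu w^+(x_0)|\bigr) \le C.
\]
For the tangential component, observe that along the boundary curve $\partial\Omega\cap\mathcal{U}$ the two functions $u-w^-$ and $w^+-u$ are non-negative and vanish at $x_0$; restricting to this $1$-dimensional curve, both attain a minimum at $x_0$, so their tangential derivatives vanish there. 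This forces $D_\tau u(x_0)=D_\tau w^-(x_0)=D_\tau w^+(x_0)$, and hence $|D_\tau u(x_0)|\le C$. Combining the two components,
\[
|Du(x_0)|^2 = |D_\nu u(x_0)|^2 + |D_\tau u(x_0)|^2 \le 2C^2,
\]
which gives the claimed pointwise bound (up to an irrelevant redefinition of the constant).

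The only subtle step is the first one: applying the comparison principle on $\Omega\cap\mathcal{U}$ rather than on $\Omega$, and making sure that in the Lorentzian setting $\epsilon=-1$ both $w^\pm$ remain strictly spacelike so that $Q$ is uniformly elliptic in a neighborhood of the graphs involved; otherwise the maximum principle underlying the comparison statement could fail. Once that ellipticity issue is handled, the remainder is a one-sided directional-derivative computation and causes no difficulty.
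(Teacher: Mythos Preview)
Your approach is exactly the paper's: apply the comparison principle on $\Omega\cap\mathcal{U}$ to obtain $w^-\le u\le w^+$, then use the coincidence at $x_0$ to control $Du(x_0)$. The paper compresses the second step into a single phrase, while you spell out the normal/tangential decomposition, which is the right thing to do.

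There is, however, a genuine error in your final combination. You bound $|D_\nu u(x_0)|\le C$ and $|D_\tau u(x_0)|\le C$ separately and add, obtaining $|Du(x_0)|^2\le 2C^2$. Calling the extra $\sqrt{2}$ an ``irrelevant redefinition of the constant'' is wrong in the Lorentzian case: the whole point of the boundary gradient estimate there is to produce a constant strictly less than $1$, and $\sqrt{2}\,C$ need not be. The fix is already contained in what you proved. You showed $D_\tau u(x_0)=D_\tau w^-(x_0)=D_\tau w^+(x_0)$ \emph{exactly}, and $D_\nu u(x_0)\in[D_\nu w^-(x_0),\,D_\nu w^+(x_0)]$. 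Writing $a=D_\tau w^\pm(x_0)$, $b^\pm=D_\nu w^\pm(x_0)$ and $b=D_\nu u(x_0)$, the inclusion $b\in[b^-,b^+]$ gives $b^2\le\max\{(b^-)^2,(b^+)^2\}$, hence
\[
|Du(x_0)|^2=a^2+b^2\le a^2+\max\{(b^-)^2,(b^+)^2\}=\max\{|Dw^-(x_0)|^2,|Dw^+(x_0)|^2\}\le C^2,
\]
which is the sharp bound the lemma claims.
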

\begin{proof} The comparison principle yields 
$w^{-}\leq u\leq w^{+}$ in $\Omega\cap U$, concluding that $|Du|\leq\{|Dw^-|,|Dw^{+}|\}$. 
\end{proof}

\section{The Dirichlet problem with zero boundary values: the Euclidean case}\label{sec7}
 
In this section we address  the Dirichlet problem \eqref{eq1} in the Euclidean  space. By Theorem \ref{pr2}, we know that   the value $H$ is not arbitrary. Without to assume convexity on $\partial\Omega$, there are results of existence assuming some smallness on the value $H$ and on the size of $\Omega$ (\cite{be,gt}. Thanks to this smallness on initial data, it is possible to  obtain height and boundary gradient estimate of the solution. If we assume   convexity, there are  different hypothesis that ensure the solvability of the Dirichlet problem and relate the size or the convexity of $\Omega$ with the value $H$ (\cite{lo10,lo13,lo14,lo15,lm2,ri,ri2}). 

 Theorem \ref{t1} solves the Dirichlet problem  in the Euclidean space for arbitrary boundary values. If we now suppose that $u=0$ on $\partial\Omega$, the hypothesis \eqref{c-se} can be weakened assuming    $\kappa_{\partial\Omega}\geq |H|$. We give two proofs of this result. The first one will be proved in arbitrary dimension and, although the idea appears generalized in other ambient spaces   (\cite{ad,lira,lo13-2,lm3}), as far as we know, in the literature there is not specifically a statement   in the Euclidean space. Here we follow   \cite{lm3}.    

\begin{theorem}\label{t5e} Let $H\not=0$. If the mean curvature of $\partial\Omega$ satisfies  $\kappa_{\partial\Omega} > |H|$, then the Dirichlet problem 
\begin{equation}\label{eq0}
\left\{\begin{array}{ll}
 \mbox{ div }\Big(\dfrac{D  u}{\sqrt{1+ |D  u|^2}}\Big) =nH & \mbox{in $\Omega$}\\
 u=0 & \mbox{on $\partial\Omega$}
\end{array}
\right.\end{equation}
in arbitrary dimension has a unique solution.
\end{theorem}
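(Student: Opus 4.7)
The strategy is the method of continuity from Section \ref{sec4}: deform through the family $Q_t[u_t]=0$, $u_t|_{\partial\Omega}=0$, $t\in[0,1]$, and show that the admissible set $\mathcal{A}$ is closed by establishing uniform $C^1$ a priori bounds \eqref{duu}. Uniqueness comes for free from the comparison principle, so the whole argument boils down to a height estimate and a boundary gradient estimate.

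The height estimate is the $n$-dimensional analog of Theorem \ref{pr2}: for any solution with zero boundary values of the cmc equation with mean curvature $tH$, one gets $|u_t|\le 1/|tH|$ in $\Omega$, which is uniform on any $[\delta,1]\subset(0,1]$; the degenerate endpoint $t=0$ is taken care of by the trivial solution $u_0\equiv 0$. For the gradient estimate, Theorem \ref{pr7} reduces the problem to controlling $|Du_t|$ on $\partial\Omega$, which I would do through Lemma \ref{le1}. Assume without loss of generality $H>0$, so that $u\le 0$ in $\Omega$; then $w^+\equiv 0$ is a trivial upper barrier at every boundary point ($Q[0]=-nH<0$, $|Dw^+|=0$), handling one side of the normal derivative. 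The substantive task is the construction of the matching lower barrier $w^-$.

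My ansatz for $w^-$ is $w^-(x)=-\psi(d(x))$, where $d=\mbox{dist}(\cdot,\partial\Omega)$ and $\psi:[0,\delta]\to[0,1/|H|]$ is smooth, strictly increasing, and concave with $\psi(0)=0$ and $\psi(\delta)=1/|H|$. The truncation at $1/|H|$ is permissible because the height estimate $u\ge -1/|H|$ automatically guarantees $w^-\le u$ on the inner edge $\{d=\delta\}$ of the strip, reducing the verification to a purely one-dimensional ODE problem. A direct computation using $|Dd|=1$ together with the identity $D_id\,D_{ij}d=0$ (a consequence of differentiating $|Dd|^2\equiv 1$) yields
\begin{equation*}
Q[w^-]=-\psi''(d)-\psi'(d)\bigl(1+\psi'(d)^2\bigr)\Delta d-nH\bigl(1+\psi'(d)^2\bigr)^{3/2}.
\end{equation*}

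The main obstacle is to solve the inequality $Q[w^-]\ge 0$ on the interval $[0,\delta]$ with a finite initial slope $\psi'(0)=C$, and it is here that the hypothesis $\kappa_{\partial\Omega}>|H|$ comes into play in a decisive way. The classical expansion $-\Delta d=\sum_{i=1}^{n-1}\kappa_i/(1-\kappa_i d)$, evaluated at the foot of the perpendicular on $\partial\Omega$, gives $-\Delta d\ge (n-1)\kappa_{\partial\Omega}>(n-1)|H|$ in a sufficiently thin strip around $\partial\Omega$. The resulting positive margin is precisely what lets one find a $\psi$ meeting the required inequality while keeping $\psi'(0)$ bounded by a universal constant $C$ depending only on $H$ and the gap $\kappa_{\partial\Omega}-|H|$. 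Note that this is weaker than Serrin's condition $\kappa_{\partial\Omega}\ge n|H|/(n-1)$ needed for arbitrary boundary values (Theorem \ref{t1}), and the gain is paid for by the availability of the height estimate, which is what permits the truncation of $\psi$ at $1/|H|$ on the short interval $[0,\delta]$. The constant $C=\psi'(0)$ is the desired uniform boundary gradient bound, and with \eqref{duu} in hand the method of continuity produces the solution.
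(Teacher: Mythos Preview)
Your approach is genuinely different from the paper's. The paper does \emph{not} build a lower barrier at all for the gradient estimate; instead it works intrinsically on the graph. Using \eqref{lo-eq1}--\eqref{lo-eq2} one has $\Delta_M(H\langle p,e_3\rangle+\langle N,e_3\rangle)\le 0$, so this function attains its minimum at some $q\in\partial\Omega$. The Hopf boundary lemma there gives $\sigma(\nu(q),\nu(q))\ge H$, and combining this with the Gauss formula for $\partial\Omega\subset M$ yields $\langle N(q),\eta(q)\rangle\,\kappa_{\partial\Omega}(q)\le H$, hence $\langle N(q),e_3\rangle\ge\sqrt{1-H^2/\kappa_{\partial\Omega}(q)^2}$. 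This is already a \emph{global} gradient bound, since $H\langle p,e_3\rangle\le 0$. No barrier, no distance function, and the only boundary quantity that enters is the mean curvature $\kappa_{\partial\Omega}$.

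Your barrier route, by contrast, has a real gap at the step you wave through. You assert that the margin $\kappa_{\partial\Omega}-|H|>0$ ``is precisely what lets one find a $\psi$'' with $\psi(0)=0$, $\psi(\delta)=1/|H|$, $\psi'(0)=C<\infty$, and $Q[-\psi(d)]\ge 0$; but you do not construct $\psi$, and in the sub--Serrin range $H<\kappa_0<nH/(n-1)$ there is an obstruction. For large $\psi'$ the leading term in your displayed expression is $\bigl((n-1)\kappa_0-nH\bigr)\psi'^3<0$, so the ODE for equality forces $\psi'$ to decay; integrating $p'\approx\bigl((n-1)\kappa_0-nH\bigr)p^3$ one finds that $\psi$ reaches the required height only after a $d$--interval of order $1/\bigl(2(nH-(n-1)\kappa_0)\bigr)$, \emph{regardless of how large you take} $C=\psi'(0)$. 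But the strip on which $d$ is $C^2$ and the formula $-\Delta d=\sum_i\kappa_i/(1-\kappa_i d)$ is valid is the tubular neighbourhood of $\partial\Omega$, whose width is controlled by the \emph{individual} principal curvatures of $\partial\Omega$, not by their mean $\kappa_{\partial\Omega}$. Nothing in the hypothesis $\kappa_{\partial\Omega}>|H|$ prevents this width from being far smaller than the $d$--length your $\psi$ needs, so the barrier may fail to close off at height $1/|H|$ before leaving the region where your computation makes sense. The paper's intrinsic argument is designed exactly to avoid any dependence on the full second fundamental form of $\partial\Omega$. (A smaller point: your height bound $|u_t|\le 1/(t|H|)$ is not uniform near $t=0$; the paper fixes this by first proving the monotonicity $u_{t_2}<u_{t_1}$ for $t_1<t_2$, so that $u_1\le u_t<0$ and it suffices to bound $u_1$.)
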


\begin{proof} Firstly, we observe that  the solutions $u_{t}$ of the method of continuity (Section \ref{sec4})  are ordered in decreasing sense according the parameter $t$. Indeed, if  $t_1<t_2$, then $Q_{t_1}[u_{t_1}]=0$ and 
$$Q_{t_1}[u_{t_2}]=(t_2-t_1) (1+|Du_{t_2}|^2)>0=Q_{t_1}[u_{t_1}].$$
  Since $u_{t_1}=0=u_{t_2}$ on $\partial\Omega$, the comparison principle yields $u_{t_2}<u_{t_1}$ in $\Omega$. Thus,    $u_1\leq u_t<0$ for all $t$, where for the value $t=1$, $u_1$ is the solution $u$ of (\ref{eq1}). By using   Lemma \ref{le1}, this implies   that it suffices to find  {\it a priori}  height and gradient   estimates for the prospective solution $u$ of \eqref{eq1}.

 If $u$ is a solution of \eqref{eq0}, then  $-u$ is a solution of \eqref{eq0} for the value $-H$. Thus, and without loss of generality, we suppose   $H>0$.  Let $M$ be the graph of $u$. By the height estimates of Theorem \ref{pr2}, we know $-1/H<u<0$ in $\Omega$.    This gives the  {\it a priori}  height estimates. According to Theorem \ref{pr7}, we need to find  {\it a priori}  boundary gradient estimates.  However, we will be able to find the gradient estimates on the domain $\Omega$.
 
 We use again the function $Hf+g$ as in Theorem \ref{pr2}. Since $\Delta_M(Hf+g)\leq 0$ and $u=0$ on $\partial\Omega$, the maximum principle ensures the existence of a boundary point $q\in\partial\Omega$ where $Hf+g$ attains its minimum, so 
\begin{equation}\label{eq5e-1}
H\langle p,e_3\rangle+\langle N,e_3\rangle\geq \min_{\partial\Omega}\langle N,e_3\rangle=\langle N(q),e_3\rangle.
\end{equation}
Furthermore, the maximum principle on the boundary implies 
$$H\langle \nu(q),e_3\rangle+\langle dN_q\nu,e_3\rangle\geq 0,$$
where $\nu$ is the inward unit conormal vector along $\partial\Omega$. If $\sigma$ is the second fundamental form, this inequality can be written as 
$$\left(H-\sigma(\nu(q),\nu(q))\right)\langle\nu(q),e_3\rangle\geq 0.$$
Since $u<0$ in $\Omega$, the boundary condition $u=0$ on $\partial\Omega$ yields $\langle\nu(q),e_3\rangle<0$, hence $H-\sigma(\nu(q),\nu(q))\leq 0$. 
If $\{v_1,\ldots,v_{n-1}\}$ is a orthonormal basis of the tangent space to $\partial\Omega$ at the point $q$, the above inequality implies
\begin{equation}\label{eq5e-2}
\sum_{i=1}^{n-1}\sigma(v_i,v_i)=nH-\sigma(\nu(q),\nu(q))\leq (n-1)H.
\end{equation}
Denote by $\nabla^{\partial\Omega}$ and $\sigma^{\partial\Omega}$ the Levi-Civita connection  and second fundamental form of $\partial\Omega$ as submanifold of $\Omega$, respectively. Let $\eta$ be the unit normal vector field of $\partial\Omega$ in $\Omega$. The Gauss formula gives
$$ {\nabla}^{0}_{v_i}v_i=\nabla_{v_i}v_i+\sigma(v_i,v_i)N(q)=\nabla^{\partial\Omega}_{v_i}v_i-\sigma^{\partial\Omega}(v_i,v_i)\eta(q)+\sigma(v_i,v_i)N(q).$$
Then $\sigma(v_i,v_i)=\sigma^{\partial\Omega}(v_i,v_i)\langle N(q),\eta(q)\rangle$. From \eqref{eq5e-2},
$$\langle N(q),\eta(q)\rangle\sum_{i=1}^{n-1}\sigma^{\partial\Omega}(v_i,v_i)\leq (n-1)H.$$
Since $\sum_{i=1}^{n-1}\sigma^{\partial\Omega}(v_i,v_i)=(n-1)\kappa_{\partial\Omega}$, we have 
$$\langle N(q),\eta(q)\rangle \kappa_{\partial\Omega}(q) \leq H,$$
so
$$\langle N(q),\eta(q)\rangle^2 \kappa_{\partial\Omega}(q)^2 \leq H^2.$$
Since $\langle N(q),e_3\rangle^2+\langle N(q),\eta(q)\rangle^2=1$, we deduce
$$\langle N(q),e_3\rangle\geq \sqrt{1-\frac{H^2}{\kappa^2_{\partial\Omega}(q)}}=\frac{\sqrt{\kappa^2_{\partial\Omega}(q)-H^2}}{\kappa_{\partial\Omega}(q)}:=C.$$
From \eqref{eq5e-1} and because $H\langle p,e_3\rangle\leq 0$ in $M$, we find
$$\langle N,e_3\rangle\geq C\quad\mbox{in $\Omega$}.$$
Finally, we conclude from \eqref{ifg}
$$|Du|\leq \frac{\sqrt{1-C^2}}{C}\quad\mbox{in $\Omega$}$$
obtaining the desired gradient estimates in $\Omega$. 
\end{proof}

The second proof is done in the two-dimensional case, where the mean convexity is now the convexity in the Euclidean plane. The proof uses spherical caps to find the boundary gradient estimates \eqref{gra}.

\begin{theorem}\label{t5} Let $H\not=0$. If the curvature of $\partial\Omega$ satisfies  $\kappa_{\partial\Omega} \geq |H|$, then the Dirichlet problem \eqref{eq0} has a unique solution.
\end{theorem}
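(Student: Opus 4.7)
The plan is to apply the method of continuity from Section~\ref{sec4}, using spherical caps of radius $r=1/|H|$ as barriers for the boundary gradient estimate. After replacing $u$ by $-u$ if necessary I assume $H>0$ and set $r=1/H$; I must produce uniform height and gradient bounds for the family $u_t\in C^{2,\alpha}(\overline\Omega)$ with $Q_t[u_t]=0$ and $u_t|_{\partial\Omega}=0$.

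The monotonicity argument from the proof of Theorem~\ref{t5e} (for $t_1<t_2$, $Q_{t_1}[u_{t_2}]>0=Q_{t_1}[u_{t_1}]$ combined with the comparison principle gives $u_{t_1}\geq u_{t_2}$) together with Theorem~\ref{pr2} applied to $u_1$ yields the uniform height bound $-r\leq u_t\leq 0$. By Theorem~\ref{pr7} it is then enough to control $|Du_t|$ on $\partial\Omega$, which I would do pointwise via Lemma~\ref{le1}. The upper barrier is trivial: $w^+\equiv 0$ satisfies $Q[w^+]=-2H<0$ and $u\leq 0=w^+$.

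The lower barrier is a spherical cap of radius $r$. Fix $x_0\in\partial\Omega$ with inward unit normal $\nu$, and pick $\rho$ in the interval $[1/\kappa_{\partial\Omega}(x_0),r]$; the hypothesis $\kappa_{\partial\Omega}\geq H$ is precisely what makes this interval non-empty. Let $h=\sqrt{r^{2}-\rho^{2}}$ and place the sphere of radius $r$ centered at $(\bar{c},h)\in\r^3$ with $\bar{c}=x_0+\rho\,\nu$, taking its lower hemisphere
$$w^{-}(y)=h-\sqrt{r^{2}-|y-\bar{c}|^{2}},\qquad y\in\overline{B(\bar{c},r)}.$$
This is a graph of a CMC-$H$ surface, so $Q[w^{-}]=0$, and direct computation yields $w^{-}(x_0)=0$, $\{w^{-}\leq 0\}=\overline{B(\bar{c},\rho)}$, and $|Dw^{-}(x_0)|=\rho/h$. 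Choosing $\rho$ slightly larger than $1/\kappa_{\partial\Omega}(x_0)$ and applying the planar curvature comparison at $x_0$, I would obtain $\partial\Omega\cap\mathcal U\subset\overline{B(\bar{c},\rho)}$ for a small enough neighborhood $\mathcal U$ of $x_0$. This forces $w^{-}\leq 0=u$ on $\partial\Omega\cap\mathcal U$, while $w^{-}\leq u$ on the interior portion $\Omega\cap\partial\mathcal U$ is arranged by the standard shrinking of $\mathcal U$ combined with the uniform height bound. Lemma~\ref{le1} then delivers the pointwise bound $|Du(x_0)|\leq\rho/h$.

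The main obstacle is the equality case $\kappa_{\partial\Omega}(x_0)=H$: there one is forced to $\rho=r$, $h=0$, and the cap collapses to a full hemisphere with vertical tangent at $x_0$, so the estimate $\rho/h$ blows up. The degeneracy is genuine, as in the extremal example where $\Omega$ is a disk of radius $r$ and the solution is the hemisphere with $|Du|\to\infty$ at $\partial\Omega$. I would circumvent this by running the continuity argument on $[0,1-\delta]$ for $\delta>0$: then $tH\leq(1-\delta)H<H\leq\kappa_{\partial\Omega}$ strictly, the above barrier is non-degenerate, and $C^{2,\alpha}(\overline\Omega)$ solutions $u_t$ exist with uniform bounds depending on $\delta$. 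Sending $\delta\to 0$, the uniform height bound together with interior gradient and Schauder estimates on compact subsets of $\Omega$ (which need no boundary information) produces a subsequential limit $u\in C^{2,\alpha}(\Omega)\cap C^0(\overline\Omega)$ solving \eqref{eq0}. Uniqueness follows from the comparison principle.
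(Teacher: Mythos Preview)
Your overall architecture matches the paper's: treat the strict case $\kappa_{\partial\Omega}>|H|$ first with spherical caps of radius $1/|H|$ as lower barriers, then recover the equality case by a limit $H_n\nearrow H$ (equivalently your $\delta\to 0$). The height bound, the monotonicity in $t$, and the reduction to boundary gradient estimates via Theorem~\ref{pr7} are all as in the paper.

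The gap is in the barrier step. You choose $\rho$ slightly larger than $1/\kappa_{\partial\Omega}(x_0)$ and a small neighbourhood $\mathcal U$ of $x_0$, and then assert that ``$w^{-}\leq u$ on the interior portion $\Omega\cap\partial\mathcal U$ is arranged by the standard shrinking of $\mathcal U$ combined with the uniform height bound.'' This does not work. On $\Omega\cap\partial\mathcal U$ with $\mathcal U$ small, the points lie inside $B(\bar c,\rho)$ but very close to $\partial B(\bar c,\rho)$, so $w^{-}$ is close to $0$ from below; meanwhile the only information on $u$ there is $-r\leq u\leq 0$. There is no reason $w^{-}\leq u$ should hold on that interior arc, and shrinking $\mathcal U$ only makes $w^{-}$ closer to $0$, not more negative. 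Nor can the height bound help, since $\inf w^{-}=h-r>-r$.

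The paper avoids this entirely by making the barrier \emph{global}: in the strict case one sets $\kappa_0=\min_{\partial\Omega}\kappa_{\partial\Omega}$ and uses that convexity with $\kappa_{\partial\Omega}\geq\kappa_0$ forces $\Omega\subset D_{1/\kappa_0}$ for the disk of radius $1/\kappa_0$ tangent to $\partial\Omega$ at $x_0$ (a Blaschke rolling argument). The spherical cap of radius $1/H$ sitting over $D_{1/\kappa_0}$ is then defined on all of $\overline\Omega$, so there is no interior boundary $\Omega\cap\partial\mathcal U$ to worry about; one simply slides the cap up from below and the touching principle gives $w^{-}\leq u$ on $\Omega$. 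Your local curvature comparison should be replaced by this global containment.

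A second, smaller point: in the equality case you claim the limit lies in $C^0(\overline\Omega)$ but give no mechanism for boundary continuity. Interior estimates and the height bound alone only give $u\in C^{2,\alpha}_{\mathrm{loc}}(\Omega)$ with $-r\leq u\leq 0$. The paper obtains $u\to 0$ at $\partial\Omega$ by trapping each $u_n$ between $0$ and a hemisphere of radius $1/H$ tangent at the given boundary point (again using the global containment $\overline\Omega\subset\overline{D_{1/H}}$), and then passing to the limit pointwise. You should include such a barrier to close the argument.
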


\begin{proof} We start as in the proof of Theorem \ref{t5e} and we follow the same notation. We only need to find  the  {\it a priori}  boundary gradient estimates.  Set
$$\kappa_0=\min_{q\in\partial\Omega}\kappa_{\partial\Omega}(q)>0$$
and $r=1/\kappa_0$. 

Firstly, we prove Theorem \ref{t3} in case of strict inequality $\kappa_{\partial\Omega}>H$. Let $x\in\partial\Omega$ be a fixed but arbitrary boundary point.  Consider $D_r$   a disc of radius $r$ such that $x\in C_r\cap \partial\Omega$ and $\Omega\subset D_r$ where $C_r$ is the boundary of $D_r$. This is possible because $\kappa_0>H$.  Consider   $C_{1/H}$ a circle of radius $1/H$ and concentric to $C_r$. Notice that $r<1/H$. After a translation we suppose that the center of $D_r$ is the origin of coordinates. 

 Let $\s^2(1/H)$ be  the hemisphere   of radius $1/H$ whose boundary is $C_{1/H}$  and  below the plane $\Pi$ of equation $x_3=0$. Let us lift up $\s^2(1/H)$ until its intersection with $\Pi$ is $C_r$. Denote by $S_r$ the piece of $\s^2(1/H)$ below $\Pi$ at this position. See Figure \ref{fig0}. The surface $S_{r}$ is a small spherical cap which is the  graph of 
  $$w^{-}(x_1,x_2)=-\sqrt{\frac{1}{H^2}-x_1^2-x_2^2},\quad x_1^2+x_2^2\leq r^2.$$

We prove now that  $M$ lies in the bounded domain determined by $S_{r}\cup D_r$. For this, we move down $S_r$ by vertical translations until $S_r$ does not intersect $M$ and then, move upwards $S_r$ until the initial position.    Since the mean curvature of $S_r$ is $H$ and $\Omega\subset D_r$, the touching principle implies that there is not a contact before that $S_r$ arrives to its original position. Once we have arrived to the original position,   in a neighborhood of the point $x$, the surface $M$ lies sandwiched between $S_r$ and $\Pi$. Then 
$$Q[w^+]=-2H<0=Q[w^-]=Q[u]$$
and consequently by  Lemma \ref{le1} 
$$\max_{\partial\Omega}|Du|<\max_{\partial S_r}\{|Dw^{-}|,|Dw^{+}|\}=\max_{\partial S_r} |Dw^{-}|=\frac{Hr}{\sqrt{1-H^2r^2}},$$
where this constant depends only on $r$ and $H$.  

\begin{figure}[hbtp]
\centering
\includegraphics[width=7cm]{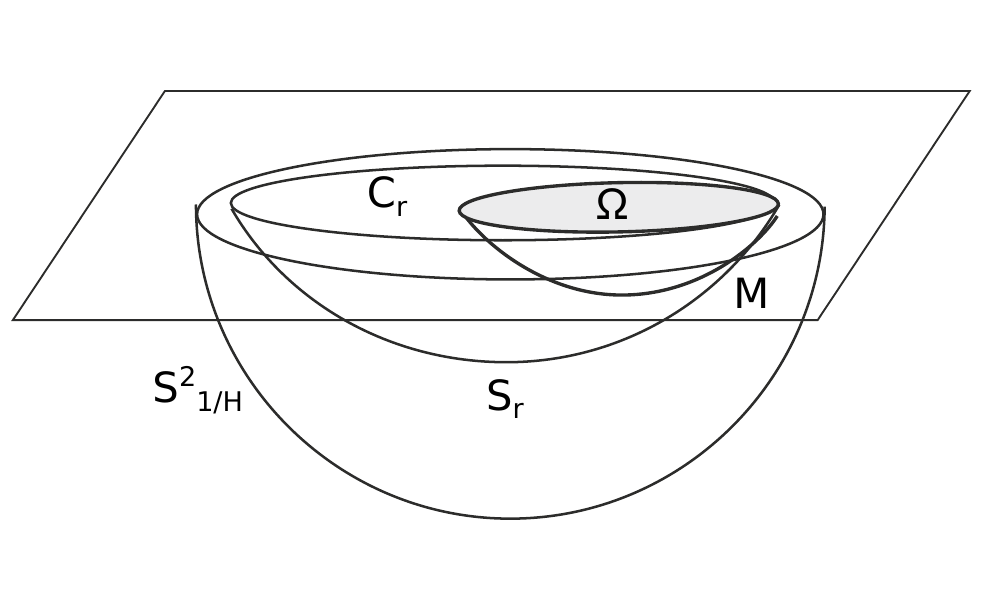}
\caption{Proof of Theorem \ref{t5}.} \label{fig0}
\end{figure}

Until here, we have obtained the existence of a solution for each $0<H<\kappa_0$. Moreover, and since the gradient is bounded from above in $\overline{\Omega}$ depending only on the initial data, the solution obtained is smooth in $\overline{\Omega}$. Now, we proceed by proving the existence of a solution of \eqref{eq1} in the case $H=\kappa_0$: in case that $\Omega$ is a round disk of radius $r$ (and $\kappa_0=1/r$), the solution is   $u(x_1,x_2)=r-\sqrt{r^2-x_1^2-x_2^2}$.

Let us consider an increasing sequence $H_n\rightarrow H$ and $u_n$  the solution of \eqref{eq1} for the value $H_n$ for the mean curvature: the solution exists because  $\kappa_0>H_n$. By the monotonicity of $H_n$ and the comparison principle, the sequence $\{u_n\}$ is monotonically increasing and converges uniformly on compact sets of $\Omega$. Let $u=\lim u_n$. Standard compactness results involving Ascoli-Arzel\'a theorem guarantee that $u\in C^2(\Omega)$ and $Q[u]=0$. It remains to check that $u\in C^0(\overline{\Omega})$ and $u=0$ on $\partial\Omega$. Let $x\in\partial\Omega$ and $\{x_m\}\subset\Omega$ with $x_m\rightarrow x$. Consider the hemisphere $\s^2(r)$  as above and let $D_{r}$ be the open disk of radius $r=1/H$ such that $\s^2(r)=\mbox{graph}(v)$, with $v\in C^\infty(D_{r})\cap C^0(\overline{D_{r}})$. Place $D_r$ such that $x\in\partial D_r$. We know that $\overline{\Omega}\subset D_{r}$ and by the touching principle, $0<u_n<v$ on $\Omega$. For each $n\in\n$, $0<u_n(x_m)<v(x_m)$. Then $0\leq u(x_m)\leq v(x_m)$. Letting $m\rightarrow\infty$, $0\leq u(x)\leq 0$. This proves the continuity of $u$ up to $\partial\Omega$ and that $u=0$ on $\partial\Omega$.
 \end{proof}

\section{The Dirichlet problem with zero boundary values: the Lorentzian case}\label{sec8}

In this section we address the Dirichlet problem in $\l^3$ following the ideas of the Euclidean case in the above section. The first result that we present is motivated by Theorem \ref{t5}, where we assumed   a strong convexity of $\partial\Omega$ comparing with the value $H$, namely, $\kappa_{\partial\Omega}\geq |H|$.  In contrast, in Lorenz-Minkowski space this   convexity assumption changes by merely the convexity $\kappa_{\partial\Omega}\geq 0$ of $\partial\Omega$.

\begin{theorem}\label{t6} If  $\kappa_{\partial\Omega}\geq 0$,  then the Dirichlet problem 
\begin{equation}\label{eq00}
\left\{\begin{array}{ll}
 \mbox{ div }\Big(\dfrac{D  u}{\sqrt{1- |D  u|^2}}\Big) =2H & \mbox{in $\Omega$}\\
 u=0 & \mbox{on $\partial\Omega$}
\end{array}
\right.\end{equation}
has a unique solution.\end{theorem}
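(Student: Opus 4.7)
My plan is to apply the method of continuity from Section~\ref{sec4}, reducing the proof to uniform a priori $C^{1}$ estimates for the family $u_{t}\in C^{2,\alpha}(\overline{\Omega})$ of solutions of $Q_{t}[u_{t}]=0$ with $u_{t}|_{\partial\Omega}=0$. Since $u$ solves \eqref{eq00} for $H$ iff $-u$ solves \eqref{eq00} for $-H$, I may assume $H\geq 0$; the case $H=0$ gives $u\equiv 0$ by uniqueness, so I take $H>0$. Theorem~\ref{t3} (with $\varphi=0$ and mean curvature $tH$) furnishes the uniform height bound, so what remains is a gradient estimate $\sup_{\Omega}|Du_{t}|\leq C<1$ with $C$ independent of $t\in[0,1]$; Theorem~\ref{pr7} further reduces this to bounding $|Du_{t}|$ on $\partial\Omega$.

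The monotonicity argument from the proof of Theorem~\ref{t5e}, applied to $Q_{t}$, yields $u_{t_{2}}<u_{t_{1}}$ in $\Omega$ whenever $t_{1}<t_{2}$ in $\mathcal{A}$; hence $u_{t}\leq 0$ in $\Omega$. Fix $x_{0}\in\partial\Omega$. By $\kappa_{\partial\Omega}\geq 0$ there is a supporting line to $\Omega$ at $x_{0}$; after a rigid motion take $x_{0}=0$, this line equal to $\{x_{1}=0\}$ and $\Omega\subset\{x_{1}\geq 0\}$. Set $d=\mbox{diam}(\Omega)$, $b=d/2$, $r_{t}=1/(2tH)$ and consider the shifted spacelike cylinder
\[
w^{-}(x_{1},x_{2})=\sqrt{r_{t}^{2}+(x_{1}-b)^{2}}-\sqrt{r_{t}^{2}+b^{2}}.
\]
By Example~\ref{ex1}(3), $w^{-}$ is a spacelike graph of constant mean curvature $tH$, so $Q_{t}[w^{-}]=0$; moreover $w^{-}(0)=0$ and $\{w^{-}\leq 0\}$ is precisely the slab $\{0\leq x_{1}\leq d\}$. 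Any $y\in\Omega$ satisfies $y_{1}\geq 0$ and $y_{1}\leq |y-x_{0}|\leq d$, so $\Omega$ lies in this slab; hence $w^{-}\leq 0=u_{t}$ on $\partial\Omega$ and the comparison principle gives $w^{-}\leq u_{t}$ in $\Omega$.

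Since $u_{t}-w^{-}\geq 0$ in $\Omega$ with equality at $x_{0}$, and since $u_{t}=0$ on $\partial\Omega$ forces the tangential component of $Du_{t}(x_{0})$ to vanish, the inward normal derivative at $x_{0}$ satisfies $\partial_{n}u_{t}(x_{0})\geq\partial_{n}w^{-}(x_{0})$; combining with $\partial_{n}u_{t}(x_{0})\leq 0$ (which holds because $u_{t}\leq 0=u_{t}(x_{0})$ in $\Omega$) gives
\[
|Du_{t}(x_{0})|=|\partial_{n}u_{t}(x_{0})|\leq\frac{b}{\sqrt{r_{t}^{2}+b^{2}}}=\frac{tHd}{\sqrt{1+t^{2}H^{2}d^{2}}}\leq\frac{Hd}{\sqrt{1+H^{2}d^{2}}}<1,
\]
uniformly in $x_{0}\in\partial\Omega$ and $t\in[0,1]$ (the last inequality uses that $s\mapsto sd/\sqrt{1+s^{2}d^{2}}$ is increasing). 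Equivalently, Lemma~\ref{le1} applies to $w^{-}$ and the trivial upper barrier $w^{+}\equiv 0$ (which obeys $Q_{t}[0]=-2tH\leq 0$ and $w^{+}\geq u_{t}$). Combined with Theorem~\ref{pr7} this closes the method of continuity.

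The main point — and the geometric reason mere convexity $\kappa_{\partial\Omega}\geq 0$ suffices here, in sharp contrast with the $\kappa_{\partial\Omega}\geq|H|$ hypothesis of Theorem~\ref{t5} — is that the Lorentzian barrier $w^{-}$ is a spacelike graph defined over the whole of $\r^{2}$. Consequently the slab $\{w^{-}\leq 0\}$ can be arranged to contain $\Omega$ by choosing $b$ only in terms of $\mbox{diam}(\Omega)$, with no curvature condition on $\partial\Omega$ beyond the existence of a supporting line. The Euclidean spherical caps used in Theorem~\ref{t5} project only onto a disc of the fixed radius $1/|H|$, which forced the curvature bound on $\partial\Omega$ there.
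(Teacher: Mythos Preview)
Your proof is correct and follows essentially the same route as the paper's: both use the convexity $\kappa_{\partial\Omega}\geq 0$ to place a supporting line at an arbitrary boundary point and then compare with a spacelike cylinder of mean curvature $tH$ to obtain the boundary gradient bound. The only difference is in the implementation: the paper first invokes the height estimate $-K<u<0$, takes a half-cylinder over a wide half-strip (parameter $m$ large), and runs a vertical moving/touching argument; you instead center the cylinder at $x_{1}=b=d/2$ so that its sublevel set $\{w^{-}\leq 0\}$ is exactly the slab $\{0\leq x_{1}\leq d\}$ containing $\Omega$, and apply the comparison principle directly on $\partial\Omega$. Your version is a bit cleaner and yields the explicit constant $C=Hd/\sqrt{1+H^{2}d^{2}}$, but the geometric content is the same.
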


\begin{proof} With a similar argument as in Theorem \ref{t5}, the solutions $u_t$ of the method of continuity are ordered by $u_{t_1}<u_{t_2}$ if $t_2<t_1$, so it suffices to get the  {\it a priori}  estimates for the solution $u$ of \eqref{eq00}. Without loss of generality, we suppose $H>0$. The height estimates were proved in  Theorem \ref{t3} (or \ref{t4}) and we showed  that there exists $K=K(\Omega,H)>0$ such that 
\begin{equation}\label{kk}
-K<u<0\quad\mbox{in $\Omega$}.
\end{equation}
In order to find the   {\it a priori}  boundary gradient estimates, consider the cylinder $C(r)$ determined by $v(x_1,x_2)=\sqrt{r^2+x_1^2}$, where $r=1/(2H)$. For each   $m>r$, let 
$$C(r;m)=\{(x_1,x_2,x_3)\in C_r: x_3\leq m\}.$$
This surface is a graph on the strip $\Omega_{r,m}=\{(x_1,x_2)\in\r^2: -\sqrt{m^2-r^2}\leq x_1\leq \sqrt{m^2-r^2}\}$.  
Take $m$ sufficiently large so $m$ fulfills the next two conditions:
\begin{equation}\label{kk1}
 v(x_1=\sqrt{m^2-r^2})-v(x_1=0)=m-r>K
 \end{equation}
\begin{equation}\label{kk2}
\mbox{diam}(\Omega)<\mbox{width}(\Omega_{r,m})=2\sqrt{m^2-r^2}.
\end{equation}
Let us restrict $v$     in the half-strip 
$$\mathcal{U}=\{(x_1,x_2)\in \Omega_{r,m}:0<x_1<\sqrt{m^2-r^2}\}$$
and $\tilde{C}(r;m)$ denotes   the graph of $v$ on $\mathcal{U}$. The boundary of $\tilde{C}(r;m)$ is formed by two parallel straight-lines 
$$L_1\cup L_2=\{v(x_1=0)\}\cup \{v(x_1=\sqrt{m^2-r^2}\},$$
where $L_1$ is contained in the plane $x_3=r$ and $L_2$ in the plane $x_3=m$, with $r<m$.

Let $x_0\in\partial\Omega$ be a fixed but arbitrary point of the boundary of $\Omega$. After a rotation about a vertical axis and a horizontal translation, we suppose $x_0=(\sqrt{m^2-r^2},0)$, $\Omega$ is contained in $\mathcal{U}$ (this is possible by \eqref{kk2}) and the tangent line $L$ to $\partial\Omega$ at $x_0$ is parallel to the $x_2$-line.  By vertical translations, we displace vertically   down $\tilde{C}(r;m)$ until  it does  not intersect $M=\mbox{graph}(u)$. Then we move vertically upwards   until $ \tilde{C}(r;m)$    intersects $M$ for the first time.
 
 We claim that the first time that  $\tilde{C}(r;m)$ touches $M$ occurs when   $L_2$ arrives to  the plane of equation $x_3=0$ and consequently, $L=L_2$. Firstly, the touching principle prohibits an interior tangent point between $M$ and  $\tilde{C}(r;m)$. On the other hand,   it is not possible that a boundary point of of $C(r;m)$, namely, a point of $L_1\cup L_2$, touches a point of $M$   because \eqref{kk} and \eqref{kk1}.    Definitively, we can move $\tilde{C}(r;m)$ until $L_2$ coincides with $L$, in particular, 
$$x_0 \in L_2 \cap \partial\Omega.$$
At this position, $\tilde{C}(r;m)$ is the graph of the function
$$w^{-}(x_1,x_2)=\sqrt{r^2+x_1^2}-m.$$
Thus  $M$ is contained between    $w^{-}$ and $w^{+}=0$ in $\Omega\cap\mathcal{U}$ with $w^{-}(x_0)=w^{+}(x_0)=u(x_0)=0$. We are in position to apply Lemma \ref{le1} because   $Q[w^{+}]<0=Q[u]=Q[w^{-}]$ and $w^{-}\leq u\leq w^+$ in $\partial(\Omega\cap\mathcal{U})$. We conclude that $|Du|\leq C$, where the constant $C$ in \eqref{gra} is 
$$C=|Dw^{-}|_{|x_1=\sqrt{m^2-r^2}}=\frac{\sqrt{m^2-r^2}}{m}.$$
\end{proof}

The key in the above proof is that the pieces of cylinders $\tilde{C}(r;m)$ of $\l^3$ have arbitrary large height and are graphs on strips of arbitrary width (see \eqref{kk2}). This gives  {\it a priori}  height estimates by choosing $m$ sufficiently large in \eqref{kk2}. Furthermore, the same cylinders provide us the boundary gradient estimates.

With a similar argument, we can derive  {\it a priori}  boundary gradient estimates by using hyperbolic caps. The only difference is that we have to assume strictly convexity $\kappa_{\partial\Omega}>0$.

After Theorem \ref{t6}, we can come back to Euclidean space asking  if it is possible a similar argument by replacing the pieces of cylinders $C(r,m)$ by Euclidean circular  cylinders. Let $H>0$ and consider the  circular cylinder $v(x_1,x_2)=-\sqrt{r^2-x_1^2}$, $r=1/(2H)$ whose mean curvature is $H$ with the orientation given in \eqref{normal}. The only caution is to assure   that the width of any strip containing the (convex) domain $\Omega$ is less than $1/|H|$ as well as its height is less than $1/(2H)$. Again this gives not only the height estimates but also the boundary gradient estimates. With the same ideas as in Theorem \ref{t6}, we prove ( \cite{lo15}):

 \begin{theorem}  Let $H>0$ and $\Omega\subset\r^2$ be a bounded domain with $\kappa_{\partial\Omega}\geq 0$. If 
\begin{equation}\label{l1}
\mbox{dist}(L_1,L_2)<\frac{1}{H},\quad \mbox{for all } (L_1,L_2)\in\mathcal{L},
\end{equation}
 then the Dirichlet problem \eqref{eq0}   has a unique solution. 
\end{theorem}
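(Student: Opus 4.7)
The plan is to transplant the argument of Theorem \ref{t6} to the Euclidean setting, replacing the Lorentzian spacelike cylinders by pieces of Euclidean circular cylinders of radius $r=1/(2H)$. The method of continuity together with the monotonicity of the family $\{u_t\}$ in $t$, which transfers verbatim from Theorem \ref{t6}, reduces the existence claim to producing \emph{a priori} height and boundary gradient estimates for the prospective solution $u$ of \eqref{eq0}. The height bound $-1/H\le u\le 0$ is immediate from Theorem \ref{pr2} with $\varphi=0$, so by Theorem \ref{pr7} it suffices to estimate $|Du|$ at each boundary point $x_0\in\partial\Omega$.

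For the gradient estimate at a fixed $x_0$, I build a local cylindrical barrier. The convexity hypothesis $\kappa_{\partial\Omega}\ge 0$ provides a supporting line $L$ of $\Omega$ at $x_0$; after a rigid motion assume $L$ is the $x_2$-axis, $x_0=0$, and $\Omega\subset\{x_1>0\}$. Applying \eqref{l1} to the pair of parallel lines in $\mathcal{L}$ with direction of $L$ that minimally enclose $\Omega$ yields some $w<1/H=2r$ with $\Omega\subset\{0<x_1<w\}$. Consider
$$v(x_1,x_2)=-\sqrt{r^2-(x_1-w/2)^2}+\sqrt{r^2-(w/2)^2},$$
which by Example \ref{ex1} is a graph of constant mean curvature $H$ over the strip $|x_1-w/2|<r$, strictly containing $[0,w]$ since $w<2r$. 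By construction $v(x_0)=0$, $v=0$ along the two lines $x_1=0$ and $x_1=w$, and $v<0$ strictly between them. Sliding the family $v-s$ upward from large positive $s$ until the first contact with $M=\mbox{graph}(u)$, the touching principle excludes interior common points (both surfaces have the same mean curvature $H$), so the first contact occurs on $\partial\Omega$ at some point $p$; then $v(p)=s^{*}$ forces $s^{*}\le 0$, while $v(x_0)=0$ and the inequality $v\le s^{*}$ on $\partial\Omega$ force $s^{*}\ge 0$. Hence $s^{*}=0$, $v\le u$ in $\Omega$ and $v(x_0)=u(x_0)=0$. Coupled with the trivial upper barrier $w^{+}\equiv 0$ (for which $Q[w^{+}]=-2H<0$), Lemma \ref{le1} delivers
$$|Du(x_0)|\le |Dv(x_0)|=\frac{w/2}{\sqrt{r^2-(w/2)^2}}=\frac{Hw}{\sqrt{1-H^2w^2}},$$
a bound which, taken uniformly over $x_0\in\partial\Omega$ via \eqref{l1}, yields the required boundary gradient estimate.

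The main obstacle — and precisely the reason hypothesis \eqref{l1} cannot be relaxed further — is that the Euclidean circular cylinder of radius $r=1/(2H)$ is a graph only over a strip of width exactly $1/H$, so any $\Omega$ whose width in some direction perpendicular to a supporting line of $\partial\Omega$ exceeds $1/H$ leaves no horizontal room to position the barrier $v$. This stands in sharp contrast with Theorem \ref{t6}, where the spacelike cylinders $\sqrt{r^2+x_1^2}$ are graphs over strips of arbitrary width with arbitrary vertical extent, so that mere convexity of $\partial\Omega$ suffices. Once the geometric arrangement \eqref{l1} is secured, the monotone sliding and the exclusion of premature contacts mirror those of Theorem \ref{t6} with only cosmetic modifications.
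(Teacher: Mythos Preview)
Your proof is correct and follows the same strategy as the paper's sketch: mimic Theorem \ref{t6} with Euclidean circular cylinders of radius $1/(2H)$ as lower barriers, using the convexity of $\partial\Omega$ to obtain a supporting line and hypothesis \eqref{l1} to fit the cylinder over $\Omega$. The only cosmetic difference is the placement of the cylinder piece: the paper uses quarter-cylinders on a strip of width $1/(2H)$ together with the sharper height bound $|u|<1/(2H)$ obtained from the cylinders themselves, whereas you center the axis over the midpoint of the enclosing strip so that $v=0$ on both boundary lines, which makes the comparison $v\le u$ on $\partial\Omega$ immediate and renders your sliding argument (while correct) unnecessary.
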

Comparing this result with Theorem \ref{t5},  the domain here is merely convex even can contain segments of straight-lines; in contrast, the domain $\Omega$ is small in relation to the value of $1/H$. 
\begin{proof} Compare $M=\mbox{graph}(u)$ with the cylinders $C(r)=\mbox{graph}(v)$. An argument as in Theorem \ref{t6} proved that the hypothesis \eqref{l1} ensures  that $-1/(2H)<u<0$ in $\Omega$: in fact, for this estimate it suffices that \eqref{l1} holds for {\it one} pair of lines $(L_1,L_2)\in\mathcal{L}$. The boundary gradient estimates follow comparing with quarter of cylinders $C(r)$ defined in the strip $0\leq x_1\leq 1/(2H)$. 
\end{proof}

The following result solves affirmatively the Dirichlet problem in the Lorentz-Minkowski space \eqref{eq00}  for {\it arbitrary } domains. For this,  we will use cmc rotational spacelike  surfaces of $\l^3$ as barriers. We now describe  the rotationally symmetric solutions of \eqref{eq1}. 
 
 Consider a rotational surface about the $x_3$-axis obtained by the curve $(r,0,w(r))$, $0\leq a<r<b$. With respect to the orientation   \eqref{normal}, the mean curvature $H$ satisfies
\begin{equation}\label{lcmcgraph} 
 \frac{w''}{(1-w'^2)^{3/2}}+\frac{w'}{r\sqrt{1-w'^2}}=2H.
\end{equation}
The spacelike condition is equivalent to $w'^2<1$. Multiplying by $r$,  a first integral is
$$Hr^2+c=\frac{r w'}{\sqrt{1-w'^2}}$$
for a constant $c\in\r$, or equivalently 
\begin{equation}\label{wl1}
w'=\pm \frac{Hr^2+c}{\sqrt{r^2+(Hr^2+c)^2}}\cdot
\end{equation}
 If $c=0$, the solution is $w(r)=\sqrt{1/H^2+r^2}$, up to a constant, that corresponds with a  hyperbolic plane $\h^2(1/H)$.

Let $H>0$ and $c<0$. Since $w'^2<1$, the function  $w$ is defined in $(0,\infty)$. By \eqref{wl1}, $w''>0$ and  $w'$ vanishes at a unique point, namely, $r_0=\sqrt{-c/H}$.   It is also clear that 
$\lim_{r\rightarrow 0}w'(r)=-1$. Consider $w=w(r;c)$ be the solution of \eqref{wl1} parametrized by the constant $c$ assuming  initial condition 
\begin{equation}\label{wl2}
w(r_0)=0,\quad (\mbox{so } w'(r_0)=0).
\end{equation}
Let $S(c)$ denote the graph of $w(r;c)$ with $r^2=x_1^2+x_2^2$.
  See Fig. \ref{fig1}, left.  Let $\xi_c=\lim_{r\rightarrow 0}w(r;c)$. The functions $w(r;c)$ have the following properties.  
\begin{enumerate}
\item $S(c)$  presents a singularity at the intersection point with the rotation axis. See Fig. \ref{fig1}, right. At this point, the surface is tangent to the (backward) light-cone from $w(0;c)$, namely, 
$$x_1^2+x_2^2=(x_3-\xi_c)^2,\quad x_3<\xi_c.$$
\item $\lim_{c\rightarrow-\infty}r_0(c)=+\infty$ and $\lim_{c\rightarrow-\infty}\xi_c=+\infty$.

\item $\lim_{c\rightarrow 0}r_0(c)=0$ and  $\lim_{c\rightarrow 0}\xi_c=0$.
\end{enumerate}

\begin{figure}[hbtp]
\centering
\includegraphics[width=5cm]{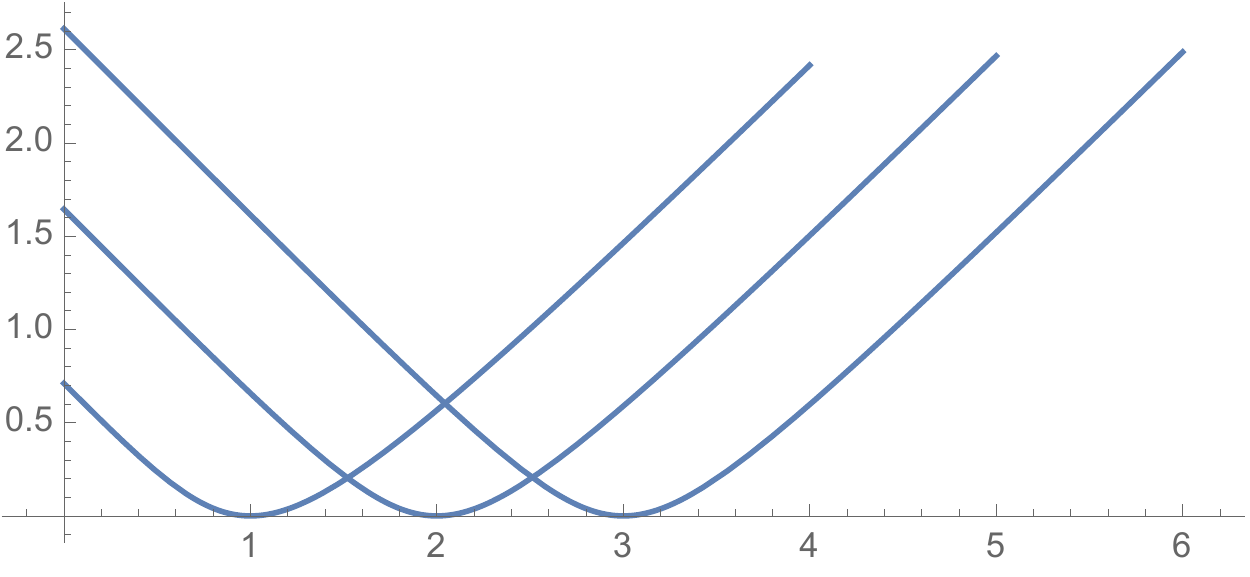}\quad\includegraphics[width=7cm]{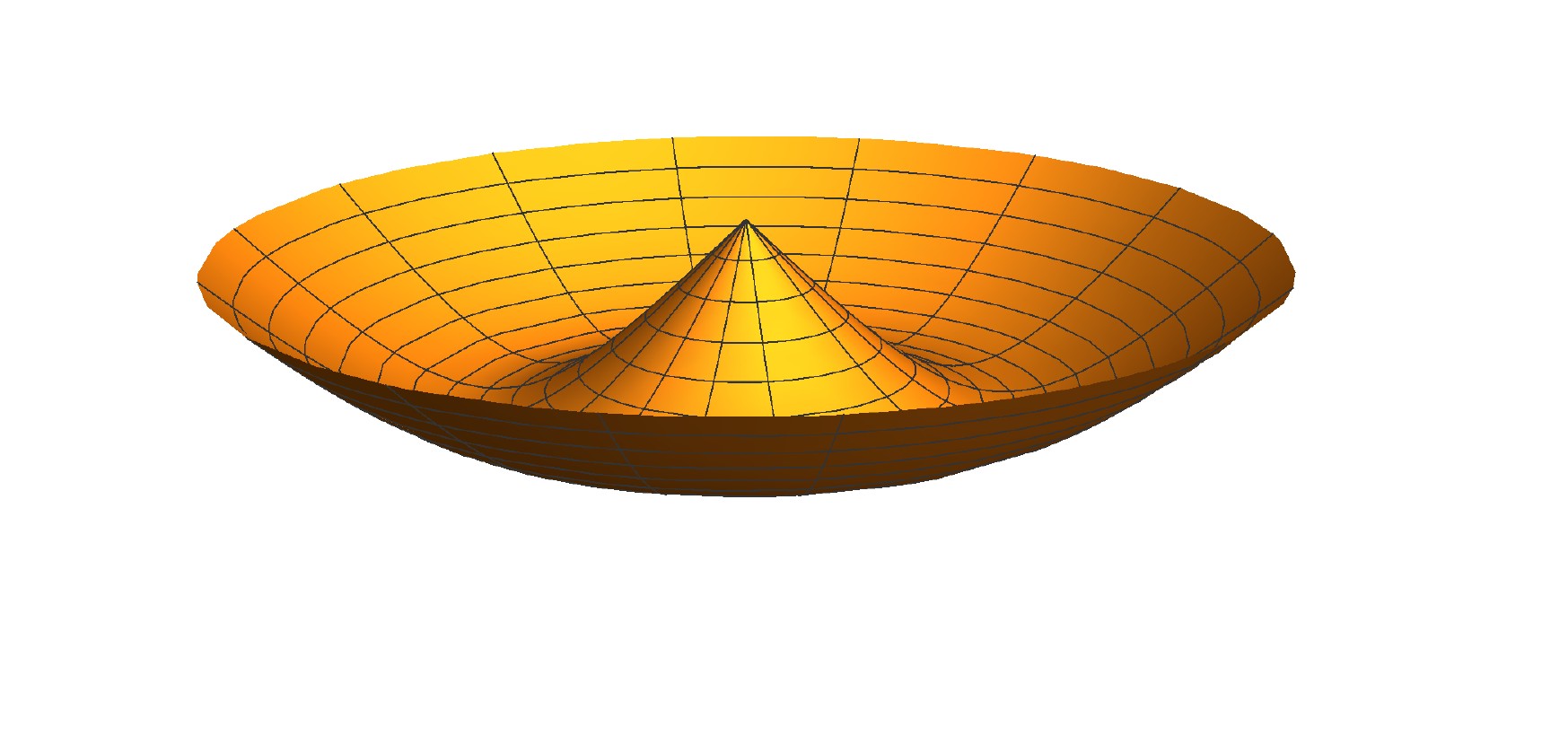}
\caption{Left:   profiles of generating curves of cmc rotational   spacelike  surfaces $S(c)$ for values $c=1$, $c=2$ and $c=3$. Right: a cmc rotational  spacelike surface.} \label{fig1}
\end{figure}
 
 The following result has not a counterpart in the Euclidean space.
 
 \begin{theorem}\label{t7} 
If $\Omega$ is a bounded smooth domain, then the Dirichlet problem \eqref{eq00}   has a unique solution. 
 \end{theorem}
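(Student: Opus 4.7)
The plan is to apply the method of continuity from Section~\ref{sec4}: I need uniform \emph{a priori} height and gradient estimates for the family $Q_t[u_t]=0$, $u_t|_{\partial\Omega}=0$, $|Du_t|<1$ with $t\in[0,1]$. Taking $H>0$ without loss of generality, observe that any cmc-$H$ surface $w$ satisfies $Q_t[w]=2H(1-t)(1-|Dw|^2)^{3/2}\ge 0$ for every $t\in[0,1]$, so a lower barrier built for the target mean curvature $H$ automatically serves simultaneously for every $Q_t$. The monotonicity of the family (as in Theorem~\ref{t6}), combined with Theorem~\ref{t3} (or~\ref{t4}), furnishes the uniform height bound $-K\le u_t\le 0$ with $K=K(\Omega,H)$; by Theorem~\ref{pr7}, it then suffices to control $\max_{\partial\Omega}|Du_t|$ uniformly. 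Uniqueness is immediate from the comparison principle.

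The upper barrier is trivially $w^+\equiv 0$. For the lower barrier at each $x_0\in\partial\Omega$, I exploit the rotational cmc surfaces $S(c)$, whose light-cone tangency at the axis is the feature that substitutes for any geometric hypothesis on $\partial\Omega$. By properties~(2) and~(3) of the family, choose $c<0$ depending only on $\Omega$ and $H$ (not on $x_0$ or $t$) so that $\xi_c>K$ and $\rho_0(c)>\mathrm{diam}(\Omega)$, and set
\[
w^-(x)=w(|x-x_0|;c)-\xi_c.
\]
Since $|x-x_0|<\rho_0(c)$ throughout $\bar\Omega$, the function $w^-$ lies in the inner bowl branch of $S(c)$: it is smooth on $\bar\Omega\setminus\{x_0\}$, extends continuously to $w^-(x_0)=0$, satisfies $Q[w^-]=0$, and obeys $w^-\le 0=u_t$ on $\partial\Omega$. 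Exhausting $\Omega$ by $\Omega\setminus\bar D_\epsilon(x_0)$ to skirt the isolated boundary singularity of $w^-$ and passing to the limit in the comparison principle yields $w^-\le u_t$ in $\bar\Omega$ for every $t\in[0,1]$.

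With $w^-\le u_t$ and both functions vanishing at $x_0$, comparison of inward normal derivatives gives $|Du_t(x_0)|\le\lim_{r\to 0^+}|w'(r;c)|=1$. The main obstacle, and the quantitative part of the argument, is to upgrade this nonstrict bound to the uniform strict estimate $|Du_t(x_0)|\le C<1$ required for uniform ellipticity of $Q_t$ in the continuity method. For this I would exploit the Taylor expansion derived from the ODE~\eqref{wl1}: a direct computation gives $w''(0)=0$ and $w'''(0)=1/c^2$, whence $w(r;c)=\xi_c-r+r^3/(6c^2)+O(r^4)$. Applying Hopf's boundary point lemma to the non-negative difference $v_t=u_t-w^-$, which satisfies a linear uniformly elliptic equation with nonpositive right-hand side $-Q_t[w^-]\le 0$, the strictly positive cubic correction should produce a quantitative strict bound independent of $t$ and $x_0$; with these uniform \emph{a priori} estimates in hand, the method of continuity delivers the desired solution.
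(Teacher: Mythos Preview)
Your setup is fine and matches the paper up to the choice of the lower barrier, but the barrier you build does not yield the gradient bound you need, and the repair via Hopf's lemma does not work.

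By centering the rotational surface at $x_0\in\partial\Omega$ you are placing the light-cone singularity of $S(c)$ exactly at $x_0$. Near $x_0$ your barrier behaves like $w^-(x)\approx -|x-x_0|+O(|x-x_0|^3)$, so $|Dw^-(x_0)|=1$, and sandwiching $w^-\le u_t\le 0$ only returns the vacuous inequality $|Du_t(x_0)|\le 1$. The cubic term cannot improve this: it is of higher order in $s$ along the inward normal and contributes nothing to the first derivative at $s=0$. As for Hopf's boundary point lemma, the difference $v_t=u_t-w^-$ does not satisfy a uniformly elliptic linear equation near $x_0$: the coefficients $a_{ij}(Dw^-)$ degenerate (the minimum eigenvalue $1-|Dw^-|^2\to 0$) and $w^-$ is not even $C^2$ at $x_0$, so the lemma is inapplicable. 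Even if a qualitative Hopf conclusion held, it would give only $|Du_t(x_0)|<1$ pointwise, not a bound $\le C<1$ uniform in $x_0$ and $t$; any quantitative Hopf estimate has constants depending on exactly the ellipticity and regularity that blow up here.

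The missing idea is to keep the singularity of $S(c)$ \emph{outside} $\overline\Omega$. The paper uses the uniform exterior circle condition (a consequence of $\partial\Omega$ being $C^2$): for a fixed $\varepsilon>0$ depending only on $\Omega$, at each $x_0\in\partial\Omega$ there is an exterior tangent disc $D_\varepsilon$. One places the rotation axis of $S(c)$ through the center of $D_\varepsilon$ and restricts to the annulus $\varepsilon\le r\le r_0(c)$, choosing $c$ so that $r_0(c)>\mathrm{diam}(\Omega)+\varepsilon$ and $w(\varepsilon;c)>K$. The resulting barrier $w^-(x)=w(|x-\text{center}|;c)-w(\varepsilon;c)$ is smooth on $\overline\Omega$, lies below $u_t$, and touches it at $x_0$, where its gradient has the explicit value
\[
|Dw^-(x_0)|=|w'(\varepsilon;c)|=\frac{|H\varepsilon^2+c|}{\sqrt{\varepsilon^2+(H\varepsilon^2+c)^2}}<1,
\]
a bound depending only on $\varepsilon$, $H$ and $c$, hence uniform in $x_0$ and $t$. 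This is what closes the argument.
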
 
\begin{proof}

If $H=0$, the solution is the function $u=0$. Let $H\not=0$. By changing $u$ by $-u$ if necessary, without loss of generality we suppose   that $H>0$. 
 We know by Theorem \ref{t3} that $u<0$ in $\Omega$. As in Theorem \ref{t6},   it suffices to find  {\it a priori}  estimates for the solution $u$ of (\ref{eq1}) which corresponds with the value $t=1$. Moreover,  the function $w^{+}=0$ is an upper barrier because $Q[w^{+}]=-2H<0$ in $\Omega$ and $w^{+}=u$ along $\partial \Omega$. In order to find lower barriers for $u$, we will take pieces of cmc rotational  surfaces $S(c)$ for suitable choices of the parameter $c$   depending only on the initial data.

 Since $\Omega$ is smooth ($C^2$ is enough), $\Omega$ satisfies a uniform exterior circle condition. This means that there exists a small enough $\varepsilon>0$ depending only on $\Omega$ with the following property: for any boundary point $x\in\partial\Omega$,   there is a disc $D_\varepsilon$ of radius $\varepsilon$ and depending on $x$ such that 
 $$D_\varepsilon\cap\Omega=\emptyset,\quad \overline{D_\varepsilon}\cap\overline{\Omega}=\{x\}.$$
As  consequence, the same property   holds for every $\varepsilon'>0$ with $\varepsilon'\leq\varepsilon$.
 
Fix the above $\varepsilon$. Let $w=w(r;c)$ be a solution of \eqref{wl1}-\eqref{wl2}   defined only in the interval $[\varepsilon,r_0]$ and let $S(c;\varepsilon)$ be its graph. Here, and in what follows, we identify the function $w=w(r)$ of one variable with the rotationally symmetric function of two variable $w=w(x_1,x_2)$ by setting $x_1^2+x_2^2=r^2$. Then the boundary of $S(c;\varepsilon)$ are the   circles 
$$\partial S(c;\varepsilon)=C_1\cup C_2:=\{(x_1,x_2,w(\varepsilon;c)):x_1^2+x_2^2=\varepsilon^2\}\cup \{(x_1,x_2,0):x_1^2+x_2^2=r_0^2\}.$$
By the height estimates of Theorem \ref{t3}, there exists a constant $K>0$ depending only on the initial data such that $-K<u<0$ in $\Omega$. Let $c<0$ be sufficiently small with the  next two   properties
\begin{equation}\label{r00}
r_0(c) >\mbox{diam}(\Omega),\quad w(\varepsilon;c)>K.
\end{equation}
 Given $\varepsilon$, the last inequality is a consequence of $\xi_c\rightarrow\infty$ as $r_0\rightarrow-\infty$. Let $w^{-}=w(r;c)$.  
 
 Let $x\in\partial\Omega$ be a boundary point and let $D_\varepsilon$ be the disc given by the uniform exterior circle condition. We now prove that it is possible to choose a suitable $S(c;\varepsilon)$   such that $S(c;\varepsilon)$ is a lower barrier for $u$ around the point $x$. In what follows, we denote by the same symbol $S(c;\varepsilon)$ any vertical translation of this surface which corresponds with the functions $w(r;c)+k$ for different choices of the constant $k$.

 After a horizontal translation, we suppose  
$x=(\varepsilon,0)$ and that the disc $D_\varepsilon$ of the uniform exterior circle condition is $x_1^2+x_2^2<\varepsilon^2$. We move vertically down the surface $S(c;\varepsilon)$ until that it does not intersect $M=\mbox{graph}(u)$. Then we come back by lifting vertically upwards $S(c;\varepsilon)$. 

\begin{figure}[hbtp]
\centering
\includegraphics[width=10cm]{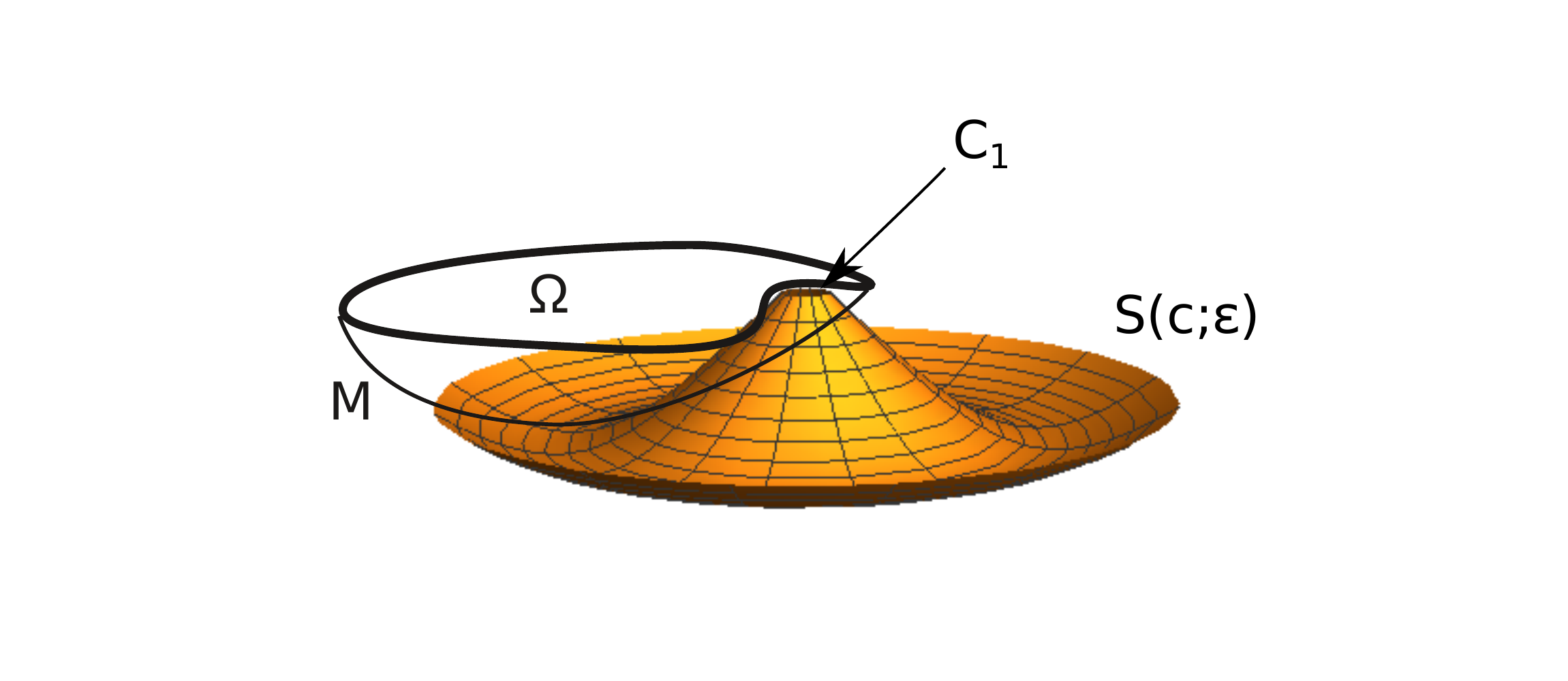}
\caption{The surface $S(c;\varepsilon)$ is a lower barrier for the graph $M$.}\label{fig2}
\end{figure}

{\it Claim.} It is possible to move upwards $S(c;\varepsilon)$ without touching $M$ until  that we place $S(c;\varepsilon)$ just at the position where  the boundary circle $ C_1$ coincides with   $\partial D_\varepsilon$.   See Fig. \ref{fig2}.

This occurs because the touching principle forbids a first contact at some common interior point. The other possibility is that during the vertical displacement, and before to arrive to the final position, some boundary point of $S(c;\varepsilon)$, namely, a point of $C_2$, touches $M$: the  circle $ C_1$ does not touch $M$ because $D_\varepsilon\cap\Omega=\emptyset$. The other circle $ C_2$ projects onto $\r^2$ in the circle $x_1^2+x_2^2=r_0^2$ which contains $\Omega$ inside   by the first property of \eqref{r00}. Finally, the circle $C_2$ does not touch $M$ because the vertical distance between $C_1$ and $C_2$ is $w^{-}(\varepsilon;c)-w^{-}(0;c)=w(\varepsilon;c)>K$ by \eqref{r00}.

Once we have placed $S(c;\varepsilon)$ so that $ C_1=\partial D_\varepsilon$, the lower barrier is $w^{-}=w(r;c)-w(\varepsilon;c)$ defined in the annulus $\mathcal{U}=\{(x_1,x_2): \varepsilon^2<x_1^2+x_2^2<r_0^2\}$. We deduce that $w^-<u$ in $\Omega\cap\mathcal{U}$. This proves that $|Du(x)|<|Dw^-(x)|$ by Lemma \ref{le1} and  this value depends only on the initial data, namely,  
$$|Dw^-(x)|=-\frac{d}{dr}{\Big|}_{r=\varepsilon}w(r;c)=-\frac{H\varepsilon^2+c}{\sqrt{\varepsilon^2+(H\varepsilon^2+c)^2}}\cdot.$$
This gives the constant $C$ in \eqref{gra}.
\end{proof}


   
 



\end{document}